\documentclass[11pt,reqno]{amsart}
\usepackage[a4paper,margin=1in]{geometry}
\usepackage[T1]{fontenc}
\usepackage{setspace}
\usepackage{amsmath,amssymb,amsthm,mathtools}

\usepackage{aliascnt}
\usepackage{tikz}
\usetikzlibrary{arrows.meta}
\usepackage{microtype}
\usepackage{hyperref}
\usepackage[nameinlink,noabbrev]{cleveref}

\newtheorem{theorem}{Theorem}[section]
\newaliascnt{lemma}{theorem}
\newtheorem{lemma}[lemma]{Lemma}
\aliascntresetthe{lemma}
\newaliascnt{proposition}{theorem}

\aliascntresetthe{proposition}
\newaliascnt{corollary}{theorem}
\newtheorem{corollary}[corollary]{Corollary}
\aliascntresetthe{corollary}

\theoremstyle{definition}
\newaliascnt{remark}{theorem}

\aliascntresetthe{remark}
\newaliascnt{definition}{theorem}
\newtheorem{definition}[definition]{Definition}
\aliascntresetthe{definition}

\crefname{theorem}{Theorem}{Theorems}
\Crefname{theorem}{Theorem}{Theorems}
\crefname{lemma}{Lemma}{Lemmas}
\Crefname{lemma}{Lemma}{Lemmas}
\crefname{proposition}{Proposition}{Propositions}
\Crefname{proposition}{Proposition}{Propositions}
\crefname{corollary}{Corollary}{Corollaries}
\Crefname{corollary}{Corollary}{Corollaries}
\crefname{remark}{Remark}{Remarks}
\Crefname{remark}{Remark}{Remarks}
\crefname{definition}{Definition}{Definitions}
\Crefname{definition}{Definition}{Definitions}

\newcommand{\Aa}{A_{\alpha}}
\newcommand{\ra}{\rho_{\alpha}}
\newcommand{\Ls}{\Lambda_s}

\DeclareMathOperator{\spec}{Spec}
\DeclareMathOperator{\diag}{diag}

\newcommand{\tp}{\mathrm{T}}

\begin{document}

\title{Sharp $A_\alpha$-spectral conditions for odd $[1,b]$-factors when $\alpha>1/2$}
\author{Silin Huang}
\date{}

\begin{abstract}
We solve, for all sufficiently large even orders,
the problem proposed by Chen~et~al.
on sharp $A_\alpha$-spectral conditions
for the existence of odd $[1,b]$-factors when $\alpha>1/2$.
Chen~et~al. showed that
every connected graph of even order $n$
with no odd $[1,b]$-factor
has $A_\alpha$-spectral radius at most
$\max_{1\le s\le k}\rho_\alpha(G_s)$,
where
$G_s=K_s\nabla\left(K_{n-(b+1)s-1}\cup(bs+1)K_1\right)$
and
$k=\lfloor(n-2)/(b+1)\rfloor$.
Thus the problem reduces
to finding the graph with the largest $A_\alpha$-spectral radius
among these obstruction graphs.
We prove that, for every $\alpha\in(1/2,1)$,
$\max_{1\le s\le k}\rho_\alpha(G_s)=\max\{\rho_\alpha(G_1),\rho_\alpha(G_k)\}$.
Moreover, for each fixed odd $b\ge 3$ and every even
$n\ge N_b=(b+1)\max\{2b+3,14\}+2$,
there exists a unique
$\alpha=\alpha_\ast(n,b)\in(1/2,1)$ at which
$\rho_\alpha(G_1)=\rho_\alpha(G_k)$.
Consequently, $G_1$ is the unique extremal graph for
$1/2<\alpha<\alpha_\ast(n,b)$,
both $G_1$ and $G_k$ are extremal at
$\alpha=\alpha_\ast(n,b)$,
and $G_k$ is the unique extremal graph for
$\alpha_\ast(n,b)<\alpha<1$.
This gives the exact $A_\alpha$-spectral threshold,
together with the sharp exceptional graphs,
for odd $[1,b]$-factors when $\alpha>1/2$ and $n\ge N_b$.
\end{abstract}

\maketitle

\section{Introduction}
For a graph $G$, let $A(G)$ and $D(G)$
be its adjacency matrix
and its diagonal degree matrix, respectively.
Nikiforov~\cite{Nikiforov2017}
introduced the $\Aa$-matrix, defined by
\begin{equation}
    \Aa(G)=\alpha D(G)+(1-\alpha)A(G),\qquad \alpha\in[0,1].
\end{equation}
This matrix interpolates between the adjacency matrix,
one half of the signless Laplacian, and the degree matrix.
We write $\ra(G)$ for the spectral radius of $\Aa(G)$.

Let $b\ge 1$ be odd.
An \emph{odd $[1,b]$-factor} of $G$ is a spanning subgraph $H$
such that the degree of every vertex $v\in V(H)$ in $H$,
denoted by $d_H(v)$, is odd, and
\begin{equation}
    1\le d_H(v)\le b\qquad \text{for all }v\in V(G).
\end{equation}
Amahashi~\cite{Amahashi1985} proved that
$G$ contains an odd $[1,b]$-factor if and only if
\begin{equation}
    o(G-S)\le b|S|\qquad \text{for every }S\subseteq V(G),
\end{equation}
where $o(G-S)$ denotes the number of odd components
(i.e. components that have an odd number of vertices) of $G-S$.
This criterion is the starting point
of the spectral approach to odd factors.

The interplay between spectral radius
and the existence of graph factors
has attracted considerable attention in recent years.
For instance, for the adjacency spectral radius,
O~\cite{O2021} obtained sharp conditions for perfect matchings,
and sharp conditions for odd $[1,b]$-factors
and general $[a,b]$-factors were subsequently obtained
in \cite{FanLinLu2022,ZhouLiu2023,WeiZhang2023}.
For regular graphs,
Kim~et~al.~\cite{KimOParkRee2020} and O~\cite{O2022}
gave eigenvalue conditions for odd $[1,b]$-factors
and general $[a,b]$-factors respectively.
For the $\Aa$-spectral radius,
Zhao~et~al.~\cite{ZhaoHuangWang2021}
studied perfect matchings, and
Chen~et~al.~\cite{ChenWenHa2024}
solved the odd $[1,b]$-factor problem for $\alpha\in[0,\frac12]$.
For other related $\Aa$-spectral factor results,
such as conditions for path-factors, (fractional) $[a,b]$-factors,
extendability, and some other factor problems,
readers may refer to
\cite{ZhouZhangSun2024,ZhengWangHuang2024,HaWen2025,LvLiXu2025,FanLiuAo2025}.

In this paper, we study the following problem
proposed by Chen~et~al.~\cite{ChenWenHa2024}.
\begin{quote}
    \textbf{Problem~1.5}.
    Let $\alpha\in(\frac12,1)$
    and let $G$ be a connected graph of even order $n$.
    Investigate the lower bound on $\ra(G)$
    to guarantee the existence of an odd $[1,b]$-factor.
\end{quote}

In~\cite[Section~3]{ChenWenHa2024},
the problem is reduced to the graph family
\begin{equation}\label{eq:introGs}
    G_s=K_s\nabla\left(K_{n-(b+1)s-1}\cup(bs+1)K_1\right),
    \qquad 1\le s\le k\coloneqq\left\lfloor\frac{n-2}{b+1}\right\rfloor.
\end{equation}
In other words, if $G$ is a connected graph of even order $n$
with no odd $[1,b]$-factors, then
\begin{equation}
    \ra(G)\le\max_{1\le s\le k}\ra(G_s).
\end{equation}
Hence Problem~1.5 is reduced to the comparison of
\begin{equation}
    \Lambda_s(\alpha;n,b)\coloneqq\rho_\alpha(G_s),
    \qquad 1\le s\le k.
\end{equation}
For $\alpha\le\frac12$,~\cite{ChenWenHa2024}
shows that $G_1$ is extremal
for all $n>b+2+\alpha+\frac{2(b+1)(b+2-\alpha)^2}{b}$.
The case $\alpha>\frac12$ is the part left open there
and is solved in this paper
for $n\ge N_b\coloneqq(b+1)\max\{2b+3,14\}+2$.

Our first main result shows that,
once the problem is reduced to the family $\{G_s\}_{s=1}^k$,
it is enough to compare the two graphs $G_1$ and $G_k$.

\begin{theorem}\label{thm:intro-endpoint-reduction}
    Let $n$ be even and let $b\ge 3$ be odd. Set
    $k=\left\lfloor\frac{n-2}{b+1}\right\rfloor$.
    Assume $k\ge 1$. Then for every $\alpha\in(\frac12,1)$,
    \begin{equation}
        \max_{1\le s\le k}\ra(G_s)=\max\{\ra(G_1),\ra(G_k)\}.
    \end{equation}
\end{theorem}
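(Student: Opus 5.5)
Throughout, $\alpha\in(\frac12,1)$ is fixed, and we regard $s$ as a continuous parameter on $[1,k]$. The aim is to show that $\Ls$ is quasi-convex in $s$: it is either monotone on $[1,k]$, or first decreasing and then increasing. Then its maximum over the integers $1\le s\le k$ is attained at $s=1$ or $s=k$.

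\textbf{Step 1: reduce to a cubic.} The graph $G_s$ has an equitable partition into three classes: the clique $K_s$, the clique $K_{m}$ with $m=n-(b+1)s-1$, and the independent set $(bs+1)K_1$. The degrees are $n-1$, $m+s-1$ and $s$, respectively. The quotient matrix is
\begin{equation*}
B_s=\begin{pmatrix}
(n-1)\alpha+(1-\alpha)(s-1) & (1-\alpha)m & (1-\alpha)(bs+1)\\
(1-\alpha)s & \alpha(m+s-1)+(1-\alpha)(m-1) & 0\\
(1-\alpha)s & 0 & \alpha s
\end{pmatrix}.
\end{equation*}
By the standard equitable-partition argument, $\Ls$ is the largest root of $f_s(x)=\det(xI-B_s)$. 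This $f_s$ is a cubic in $x$ whose coefficients are polynomials in $s$ of degree at most three.

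\textbf{Step 2: a sign-change criterion for $\partial_s \Ls$.} By implicit differentiation,
\begin{equation*}
\frac{d\Ls}{ds}=-\frac{\partial_s f_s(\Ls)}{\partial_x f_s(\Ls)},
\end{equation*}
and $\partial_x f_s(\Ls)>0$ because $\Ls$ is the largest simple root. So the sign of $\Ls'$ is the sign of $-\partial_s f(s,x)$ evaluated at $x=\Ls$.

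I would first obtain crude bounds
\begin{equation*}
\max\{n-1-\text{small},\ \alpha(n-1)\}\le \Ls\le n-1,
\end{equation*}
for instance $\Ls\ge \alpha(n-1)$ from the degree of the $K_s$ vertices, and $\Ls\ge$ the largest eigenvalue of the $K_m\cup K_s$ part. Then I would study $g(s,x)=\partial_s f(s,x)$, which is polynomial in both variables, on the region these bounds define. The goal is to show that along the curve $x=\Ls$, the function $s\mapsto g(s,\Ls)$ changes sign at most once, from positive to negative; that is, $\Ls$ goes from decreasing to increasing.

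\textbf{Step 3: the main obstacle.} Establishing this single sign change is the hard part. What I would try first is to substitute $x=\Ls$ and use $f_s(\Ls)=0$ to eliminate the $x^3$ term, which makes $g$ affine or quadratic in $x$. I would then show that the resulting expression, viewed as a function of $s$ with $x=\Ls$ inserted, is increasing; since $\Ls$ itself is monotone on any interval where the derivative keeps its sign, this gives a bootstrapping argument.

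If that fails, the fallback is a direct convexity argument. Using the second implicit derivative, I would show $\Ls''>0$ whenever $\Ls'=0$. This rules out interior local maxima, which suffices. The sign computation reduces to a polynomial inequality in $(s,\alpha,n,b)$ on the region $1\le s\le k$, $\alpha>\frac12$, $b\ge3$. I would verify it by grouping terms as multiples of $(2\alpha-1)$, $(b-1)$ and $(n-(b+1)s-2)\ge 0$, since $s\le k$ guarantees $m\ge1$.

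The hypothesis $\alpha>\frac12$ should enter exactly where the cross term $(1-\alpha)^2$ must be dominated by $\alpha^2$-type diagonal contributions. For $\alpha\le\frac12$ the monotone-decreasing behaviour of the earlier paper is recovered instead.

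Once the absence of an interior maximum is established, the identity in the theorem follows, because a continuous function on $[1,k]$ with no interior local maximum attains its maximum over $\{1,\dots,k\}$ at an endpoint.
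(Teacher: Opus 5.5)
Your setup (the quotient matrix, the sign criterion for $\Lambda_s'$ via implicit differentiation, and the closing endpoint argument) is fine. The gap is that the one substantive claim, that $s\mapsto\Lambda_s$ has no interior local maximum on $[1,k]$, is only announced. Step~3 offers two strategies (``what I would try first'', ``if that fails'') and carries out neither, and the polynomial inequality you say you would verify is never written down.

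Both strategies in fact reduce to the same missing fact. If $\Lambda_{s_0}'=0$, differentiating $\chi_s(\Lambda_s)=0$ twice gives $\Lambda_{s_0}''=-\partial_s^2\chi_s(x)/\partial_x\chi_s(x)$ at $(s,x)=(s_0,\Lambda_{s_0})$. Likewise, at a zero of $s\mapsto\partial_s\chi_s(\Lambda_s)$, its derivative equals $\partial_s^2\chi_s(\Lambda_s)$. So what you need is $\partial_s^2\chi_s(x)<0$, i.e.\ concavity of the cubic $s\mapsto\chi_s(x)$. That is precisely the key lemma of the paper:
$\partial_s^2\chi_s(x)=-2\bigl(b(x-u_k)+\beta H_s\bigr)$ with $H_s=3b((b+2)\beta-1)s+b(n+k-1)-2b\beta n+3b\beta+2\beta-1$.
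One then shows $H_s>0$ for $0<\beta<\frac12$: $H_s$ is affine in $s$ and in $\beta$, and it is positive at the relevant corners. This is where $\alpha>\frac12$ genuinely enters, since the coefficient of $n$ in $H_s$ is $b(2\alpha-1)$. Without this computation, or an equivalent one, your argument is incomplete.

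With this lemma the paper needs no implicit differentiation. Set $\Lambda_\ast=\max\{\Lambda_1,\Lambda_k\}$. One checks that $\Lambda_\ast>\max\{v_s,w_s\}$ for all $s$ and that $\Lambda_\ast>u_k$. Hence $s\mapsto\chi_s(\Lambda_\ast)$ is strictly concave and nonnegative at $s=1$ and $s=k$, so it is positive for $1<s<k$. The criterion $\chi_s(x)>0\iff x>\Lambda_s$ then gives $\Lambda_s<\Lambda_\ast$.

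If you keep your route, two cautions apply.
\begin{itemize}
\item At a critical point you only know $\Lambda_{s_0}>u_{s_0}=u_k-\beta(k-s_0)$, not $\Lambda_{s_0}\ge u_k$. You would therefore need the extra margin $H_{s_0}>b(k-s_0)$. This does hold, by checking $\beta=0$ and $\beta=\frac12$ for each fixed $s$, but it must be verified.
\item Your crude lower bound $\Lambda_s\ge n-1-\text{small}$ is false for $s$ of order $n$ and $\alpha$ near $\frac12$; there $\Lambda_k$ lies below $n-1$ by an amount of order $n$. The usable bound is $\Lambda_s>\max\{u_s,v_s,w_s\}$.
\end{itemize}
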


Our second main result determines the unique value of $\alpha$
at which $\ra(G_1)=\ra(G_k)$,
for all even $n$ above an explicit threshold.

\begin{theorem}\label{thm:intro-endpoint-crossing}
    Let $n$ be even and let $b\ge 3$ be odd.
    Then, for every even $n\ge N_b$,
    there exists a unique
    $\alpha_\ast(n,b)\in(\frac12,1)$ such that
    \begin{equation}
        \ra(G_1)=\ra(G_k).
    \end{equation}
    More precisely, we have
    \begin{equation}
        \begin{aligned}
            \alpha_\ast(n,b)
            &=1-\frac{(b+1)^2}{bn}+\frac{(b+1)^2\left(b^3+b\ell-b-1\right)}{b^3n^2}+O(n^{-3}), \\
            \ell&=n-(b+1)k-1.
        \end{aligned}
    \end{equation}
    Consequently, for $\frac12<\alpha<\alpha_\ast(n,b)$,
    $G_1$ is the unique extremal graph
    in $\{G_s\}_{s=1}^k$;
    for $\alpha=\alpha_\ast(n,b)$,
    both $G_1$ and $G_k$ are the extremal graphs
    in $\{G_s\}_{s=1}^k$;
    for $\alpha_\ast(n,b)<\alpha<1$,
    $G_k$ is the unique extremal graph
    in $\{G_s\}_{s=1}^k$.
\end{theorem}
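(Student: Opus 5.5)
We compute everything from the quotient matrices of the natural equitable partitions and then run a one-variable sign analysis in $\alpha$. Write $m_s=n-(b+1)s-1$. For $G_s$ the partition is $V=V(K_s)\cup V(K_{m_s})\cup V((bs+1)K_1)$, with degrees $n-1$, $m_s+s-1$ and $s$. The quotient matrix of $\Aa(G_s)$ is
\begin{equation*}
B_s=\begin{pmatrix}
\alpha(n-1)+(1-\alpha)(s-1) & (1-\alpha)m_s & (1-\alpha)(bs+1)\\
(1-\alpha)s & \alpha(m_s+s-1)+(1-\alpha)(m_s-1) & 0\\
(1-\alpha)s & 0 & \alpha s
\end{pmatrix}.
\end{equation*}
Hence $\Ls(\alpha;n,b)$ is the largest root of the cubic $\Phi_s(\alpha,\lambda)=\det(\lambda I-B_s)$. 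We only need $s=1$ (with $m_1=n-b-2$) and $s=k$ (with $m_k=\ell\in\{1,\dots,b+1\}$). The key observation is that $\ell$ is bounded, while $n$ and $k\approx n/(b+1)$ are large. Because of \Cref{thm:intro-endpoint-reduction}, all the consequences stated after the asymptotic formula follow once we know the sign of $f(\alpha)=\Lambda_k-\Lambda_1$ on $(\tfrac12,1)$. So the goal is to show that $f$ has exactly one zero $\alpha_\ast$, that $f<0$ to its left, and that $f>0$ to its right.

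\textbf{Step 1: the comparison function.} Since $\Lambda_1$ is the largest root of $\Phi_1(\alpha,\cdot)$, we have $\Lambda_k>\Lambda_1$ iff $\Phi_1(\alpha,\Lambda_k)>0$, provided $\Lambda_k$ exceeds the second root of $\Phi_1$. That proviso holds because the second root is at most about $\alpha n$ minus a linear term, while $\Lambda_k\ge \alpha(n-1)$. We therefore study $g(\alpha)=\Phi_1(\alpha,\Lambda_k(\alpha))$. To make it tractable, we use the cruder bounds
\begin{equation*}
\alpha(n-1)+(1-\alpha)(k-1)\le \Lambda_k\le \alpha(n-1)+(1-\alpha)(k-1)+O(1),
\end{equation*}
and the analogous two-sided bounds for $\Lambda_1$, which is close to the dominant root of the $2\times2$ block coming from $K_1\nabla K_{n-b-2}$. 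This gives the endpoint behaviour directly. At $\alpha=\tfrac12$ (and just above), $\Lambda_1\approx n/2+O(1)$ beats $\Lambda_k\approx n/2-$ a multiple of $n/(b+1)$ correction, so $f(\tfrac12)<0$; this also matches the known $\alpha\le\tfrac12$ result of \cite{ChenWenHa2024} by continuity. Near $\alpha=1$ we use the bounds to show $f>0$. At $\alpha=1$ both radii equal $n-1$, so the sign is read from the first-order term in $1-\alpha$: the $k$ dominating vertices of $G_k$ contribute $(1-\alpha)(k-1)$ beyond $\alpha(n-1)$. This beats the corresponding quantity for $G_1$, which is roughly $(1-\alpha)^2 n^2/(\alpha n)$-type, once $1-\alpha\ll 1/n$ fails in the right direction. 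Thus the crossing lives at $1-\alpha=\Theta(1/n)$.

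\textbf{Step 2: localisation and expansion.} We substitute $\alpha=1-c/n$ and expand both largest roots as series in $1/n$. For $\Lambda_k$ we use perturbation of the dominant eigenvalue of $B_k$. For $\Lambda_1$ we use the cubic with $\lambda=n-1-\mu$ and solve for $\mu$. Equating the leading terms gives $c=(b+1)^2/b$. The next term follows by one more step of the implicit-function expansion, and this is where $\ell$ enters, through $m_k=\ell$. The result is the stated formula for $\alpha_\ast$. The same expansion shows that $\partial_\alpha f>0$ on a window $|1-\alpha-c/n|\le C/n$, which gives uniqueness inside that window.

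\textbf{Step 3: no other zeros (the main obstacle).} It remains to show that $f\neq0$ outside the window, uniformly for all even $n\ge N_b$ rather than only asymptotically. We split $(\tfrac12,1)$ into three pieces: $\tfrac12<\alpha\le 1-C_1/n$, the window, and $1-C_2/n\le\alpha<1$. On each outer piece we plug explicit rational upper and lower bounds for $\Lambda_1$ and $\Lambda_k$ into $\Phi_1$ or $\Phi_k$. This yields polynomial inequalities in $(n,\alpha)$, and we verify them for $n\ge (b+1)\max\{2b+3,14\}+2$. The constant $14$ and the factor $2b+3$ should be exactly what makes these explicit estimates close, for example by forcing $k\ge\max\{2b+3,14\}$. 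If the direct bounds are too weak on the outer pieces, the fallback is to compute the resultant of $\Phi_1$ and $\Phi_k$ in $\lambda$, a polynomial in $\alpha$. We would then show that it has a single root in $(\tfrac12,1)$ via a Descartes/Sturm sign count after the substitution $\alpha=1-t$.
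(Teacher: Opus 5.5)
Your skeleton is the right one, and the $\alpha_\ast$ expansion is obtained in the paper exactly this way. You pass to $c=n(1-\alpha)$ and locate the crossing at $c_0=(b+1)^2/b$ by equating $\Lambda_1\approx n-1-(b+1)$ with $\Lambda_k\approx n-1-\tfrac{b}{b+1}c$. One more Newton step in $1/n$ brings in $\ell$, and the extremal graphs are read off from the endpoint reduction.

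The gap is that existence and uniqueness of $\alpha_\ast$ must hold for \emph{every} even $n\ge N_b$, and your uniqueness argument inside the window is purely asymptotic. It applies only for $n$ beyond some unspecified $n_0$, so it says nothing about, say, $b=3$, $n=58$. Even asymptotically, a uniform $O(n^{-2})$ remainder for $\Lambda_s$ does not control the $c$-derivative of that remainder. So $\partial_\alpha f>0$ does not follow from ``the same expansion'' without further work.

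What is missing is a non-asymptotic derivative estimate on a fixed $c$-interval containing $c_0$. By Hellmann--Feynman, $\Lambda_s'(c)=z_s^{\mathrm T}\widetilde{B}_s'(c)z_s$ for the unit Perron vector $z_s=(p_s,q_s,\zeta_s)^{\mathrm T}$. The Perron equations give $q_1/p_1=a/(\Lambda_1-v_1)\ge\sqrt{2/3}$, using the test point bound $\Lambda_1\le v_1+\sqrt{3/2}\,a$. They also give $q_k/p_k\le A/(u_k-v_k)$ and $\zeta_k/p_k\le E/(u_k-w_k)$. Together these yield $\Lambda_1'(c)>-\tfrac23>\Lambda_k'(c)$ on all of $[b+1,2b+1]$ for every $n\ge N_b$, and that is what makes the zero unique.

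The outer pieces are also left as a promise, and the bounds you propose to feed into them are wrong.
\begin{itemize}
\item The bound $\Lambda_k\le u_k+O(1)$ fails once $1-\alpha$ is of order one. By the Schur equation, $\Lambda_k-u_k$ is of order $c^2/n$, hence of order $n$ near $\alpha=\tfrac12$.
\item At $\alpha=\tfrac12$ one has $\Lambda_1\ge n-b-2$ (since $G_1\supseteq K_{n-b-1}$) and $\Lambda_k>u_k=\tfrac{n+k}{2}-1$, not ``$n/2\pm\cdots$''. Your sign conclusion survives, but the reasoning does not.
\item The $1/n$-expansion of $\Lambda_1$ breaks down near $c=b+1$. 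There $u_1\approx v_1$ and $\Lambda_1-\max\{u_1,v_1\}$ is of order $1/\sqrt n$, so whatever left-range estimate you use must stay valid through this near-degenerate zone.
\end{itemize}

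The paper handles the outer ranges as follows.
\begin{itemize}
\item For $0<c\le b+\tfrac12$ it uses $\Lambda_1-u_1\le 2c^2/n<c(k-1)/n=u_k-u_1<\Lambda_k-u_1$.
\item For $b+\tfrac12\le c\le b+1$ it uses the test point $x=u_1+\tfrac34$, with $\chi_1(x)>0$ and $u_k-u_1>\tfrac34$. This left range is one of the places where $k\ge\max\{2b+3,14\}$ is used.
\item For $2b+1\le c<n/2$ it combines $\Lambda_1>v_1$ with $\Lambda_k\le\Lambda_k^{+}\le u_k+3bc^2/((b+1)^2n)$. This leaves a concave quadratic in $c$ to check at $c=2b+1$ and $c=n/2$.
\end{itemize}

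Your resultant/Sturm fallback does not close the gap either. It needs a root count that is uniform in the symbolic parameters $n$, $b$, $\ell$, which is precisely the hard part.
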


Combining the two theorems above yields the following theorem.

\begin{theorem}\label{thm:intro-general}
    Let $G$ be a connected graph of even order
    $n\ge N_b=(b+1)\max\{2b+3,14\}+2$.
    Let $b\ge 3$ be odd and let $\alpha_\ast(n,b)$
    be the parameter determined from
    \Cref{thm:intro-endpoint-crossing}.
    Then:
    \begin{enumerate}
        \item If $\frac12<\alpha<\alpha_\ast(n,b)$ and
        \begin{equation}
            \ra(G)\ge\ra(G_1),
        \end{equation}
        then $G$ contains an odd $[1,b]$-factor
        unless $G\cong G_1$.

        \item If $\alpha_\ast(n,b)<\alpha<1$ and
        \begin{equation}
            \ra(G)\ge\ra(G_k),
        \end{equation}
        then $G$ contains an odd $[1,b]$-factor
        unless $G\cong G_k$.

        \item If $\alpha=\alpha_\ast(n,b)$ and
        \begin{equation}
            \ra(G)\ge\ra(G_1)=\ra(G_k),
        \end{equation}
        then $G$ contains an odd $[1,b]$-factor
        unless $G\cong G_1$ or $G\cong G_k$.
    \end{enumerate}
\end{theorem}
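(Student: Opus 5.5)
The statement is \Cref{thm:intro-general}, which combines the reduction of Chen~et~al.\ with \Cref{thm:intro-endpoint-reduction,thm:intro-endpoint-crossing}. I would therefore treat it as a corollary and concentrate on the uniqueness claims. Let $G$ be a connected graph of even order $n\ge N_b$ with no odd $[1,b]$-factor. By Amahashi's criterion there is a set $S\subseteq V(G)$ with $o(G-S)>b|S|$. I take $s=|S|$.

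\textbf{Step 1: fixing the structure of $S$.} Since $n$ is even, $o(G-S)$ and $|S|$ have the same parity. Because $b$ is odd, $b|S|$ also has this parity, so in fact $o(G-S)\ge bs+2$. If $s=0$, then $o(G)\ge 2$, which contradicts connectivity. Hence $s\ge 1$, and counting vertices gives $n\ge s+bs+2$, that is, $1\le s\le k$.

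\textbf{Step 2: enlarging $G$ without decreasing $\rho_\alpha$.} I add edges to $G$ while keeping the obstruction $S$. First make $S$ a clique and join it completely to everything else. Next merge the even components, together with all odd components except $bs+1$ of them, into one odd clique, using parity to see that the merged part is odd. Finally replace each of the remaining $bs+1$ odd components by a single vertex and move its surplus vertices into the big clique; parity is preserved because each component loses an even number of vertices. The result is exactly $G_s$. Every edge addition strictly increases $\rho_\alpha$, by Perron--Frobenius, since $A_\alpha$ of a connected graph is irreducible for $\alpha<1$. The vertex-moving step is the one I expect to be the main obstacle. Here I would cite the argument of \cite[Section~3]{ChenWenHa2024}, which gives $\rho_\alpha(G)\le\rho_\alpha(G_s)$ with equality only if $G\cong G_s$. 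If I had to prove it myself, I would use a Kelmans-type transfer, moving neighbours to the vertex with the larger Perron entry.

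\textbf{Step 3: combining with the two main theorems.} \Cref{thm:intro-endpoint-reduction} gives $\rho_\alpha(G)\le\rho_\alpha(G_s)\le\max\{\rho_\alpha(G_1),\rho_\alpha(G_k)\}$. \Cref{thm:intro-endpoint-crossing} then identifies the maximum in each range of $\alpha$ and asserts that the extremal graph is unique within the family $\{G_s\}$. Take case (1). Suppose $\rho_\alpha(G)\ge\rho_\alpha(G_1)$ and $G$ has no odd $[1,b]$-factor. Then every inequality in the chain must be an equality. Uniqueness within the family forces $s=1$, and equality in Step 2 forces $G\cong G_1$. Cases (2) and (3) follow in the same way, with $G_k$, or with either of $G_1$ and $G_k$, respectively.

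It remains to check that the exceptional graphs really are exceptional. In $G_s$, take $S$ to be the $K_s$. Removing it leaves $bs+1$ isolated vertices, which are odd components, together with $K_{n-(b+1)s-1}$. This clique has odd order, because $n$ is even and $(b+1)s$ is even. So $o(G_s-S)=bs+2>bs$, and by Amahashi's criterion $G_s$ has no odd $[1,b]$-factor. This shows the exceptions are genuine, and with Step 3 it completes the proof of \Cref{thm:intro-general}.
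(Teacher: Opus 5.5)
Your proposal is correct and follows essentially the same route as the paper. The paper also invokes the Chen et al.\ reduction together with its equality clauses (\Cref{lem:cwh-reduction,lem:sharp-transfer}) to get $G\cong G_s$ for an extremal $G_s$, then uses \Cref{thm:intro-endpoint-reduction,thm:intro-endpoint-crossing} to identify the extremal graph, with \Cref{lem:Gs-obstruction} as your final check; your Steps 1--2 just unpack that cited reduction, including a Kelmans-type transfer toward the larger clique that works for every $\alpha\in[0,1)$, so the dependence on Chen et al.\ is the same as in the paper.
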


The structure of this paper is as follows.
\Cref{sec:reduction,sec:quotient}
collect the reduction and quotient-matrix tools.
\Cref{sec:compareGs} proves that only
$G_1$ and $G_k$ need to be compared, and
\Cref{sec:endpoint} compares
$G_1$ and $G_k$ and determines the value
of $\alpha$ for which they have the same spectral radius.

\section{\texorpdfstring{Preliminaries and the graph family $G_s$}{Preliminaries and the graph family G\_s}}\label{sec:reduction}
All graphs in this paper
are simple, finite, undirected, and connected,
unless stated otherwise.
We write $\nabla$ for the join of two graphs
and $\cup$ for the disjoint union of two graphs.
Throughout the paper, we suppose that $n$ is even and $b$ is odd.
This is because if $b$ is even, then an odd $[1,b]$-factor
is equivalent to an odd $[1,b-1]$-factor.
Moreover, if $b=1$, this is exactly the perfect-matching case
and has been treated by Zhao et al.~\cite{ZhaoHuangWang2021}.
Hence, in this paper, we always assume that $b\ge 3$.

We begin with Amahashi's criterion.

\begin{lemma}[Amahashi~{\cite[Theorem~2]{Amahashi1985}}]\label{lem:amahashi}
    Let $G$ be a graph and let $b$ be a positive odd integer.
    Then $G$ contains an odd $[1,b]$-factor if and only if
    \begin{equation}
        o(G-S)\le b|S|\qquad \text{for every }S\subseteq V(G).
    \end{equation}
\end{lemma}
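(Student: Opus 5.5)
This is Amahashi's theorem, a classical result that the paper cites rather than proves. If I were to prove it myself, I would follow the standard route through Tutte's $f$-factor theorem, or Lovász's parity $(g,f)$-factor theorem, specialised to $g\equiv 1$ and $f\equiv b$ with parity constraint "odd".

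\emph{Necessity.} Suppose $H$ is an odd $[1,b]$-factor and fix $S\subseteq V(G)$. Take any odd component $C$ of $G-S$. The sum $\sum_{v\in C}d_H(v)$ is a sum of $|C|$ odd numbers, so it is odd. The edges of $H$ inside $C$ contribute an even amount to this sum. Hence at least one edge of $H$ joins $C$ to $S$. Since distinct components are disjoint, we obtain
$o(G-S)\le e_H(S,V\setminus S)\le \sum_{v\in S}d_H(v)\le b|S|$.

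\emph{Sufficiency.} First, taking $S=\emptyset$ gives $o(G)=0$, so every component of $G$ has even order. I would then invoke Lovász's parity factor theorem. Let $f\equiv b$, let $g\equiv 1$, and note $f\equiv g \pmod 2$. The theorem says a parity $(g,f)$-factor exists if and only if, for all disjoint $S,T$,
\[
\delta(S,T)=b|S|-|T|+\sum_{v\in T}d_{G-S}(v)-q(S,T)\ge 0,
\]
where $q(S,T)$ counts the components $C$ of $G-S-T$ with $e(C,T)+|C|$ odd.

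The main work is to reduce this to the case $T=\emptyset$. Fix an optimal pair $(S,T)$ with $T$ minimal, and show that $T$ must be empty. Moving a vertex $v\in T$ into $G-S-T$ changes $-|T|+\sum_{T}d_{G-S}$ by $-(d_{G-S}(v)-1)$ up to edges inside $T$. It merges all components adjacent to $v$ into one, which decreases $q$ by at most $d_{G-S-T}(v)$ roughly. A careful parity count should show that $\delta$ does not increase.

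Once $T=\emptyset$, the condition reads $b|S|\ge q(S,\emptyset)=o(G-S)$, which is exactly the hypothesis.

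I expect this parity bookkeeping in the $T$-reduction to be the main obstacle. A cleaner alternative avoids it. Take a spanning forest, and use the fact that every even-order tree has a unique spanning subgraph with all degrees odd (the "T-join" for $T=V$); this gives an odd factor $F$ of $G$. Among all odd factors, choose one minimising $\sum_v \max(0,d_F(v)-b)$. If some vertex $u$ has $d_F(u)>b$, let $S$ be the set of vertices with degree at least $b$ reachable by alternating trails from $u$. Then show that $o(G-S)>b|S|$ by counting $F$-edges leaving $S$, which contradicts the hypothesis. Either way, the heart of the argument is an alternating-structure or deficiency computation.
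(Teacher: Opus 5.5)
The paper does not prove this lemma; it quotes it from Amahashi, so there is no argument in the text to compare with. Your necessity proof is complete. For sufficiency, reducing Lovász's parity $(g,f)$-factor theorem (with $g\equiv1$, $f\equiv b$) to pairs with $T=\emptyset$ is a sound route. However, the step you leave as ``should show'' is the entire content of that reduction, and the local count you describe does not give it.

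Put $T'=T\setminus\{v\}$ and $R=V\setminus(S\cup T)$. The term $-|T|+\sum_{x\in T}d_{G-S}(x)$ changes by exactly $1-d_{G-S}(v)$. Components of $G-S-T$ not adjacent to $v$ keep their status, because $e_G(C,T')=e_G(C,T)$. The $m\le e_G(v,R)=d_{G-S}(v)-e_G(v,T)$ components adjacent to $v$ merge with $v$ into a single component. Hence
\[
\delta(S,T')-\delta(S,T)=1-d_{G-S}(v)+q(S,T)-q(S,T')\le 1-e_G(v,T),
\]
which still allows an increase of $1$ whenever $v$ has no neighbour in $T$.

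The missing idea is a \emph{global} parity invariant. Since $b$ is odd, $\sum_{x\in T}d_{G-S}(x)\equiv e_G(T,R)$ and $q(S,T)\equiv\sum_C\bigl(|C|+e_G(C,T)\bigr)=|R|+e_G(T,R)\pmod 2$. Therefore $\delta(S,T)\equiv|S|+|T|+|R|=n\pmod 2$ for every pair $(S,T)$. The difference above is thus even and at most $1$, so $\delta(S,T')\le\delta(S,T)$. Your choice of a minimiser with $|T|$ minimal then forces $T=\emptyset$, and $\min\delta=\min_S\bigl(b|S|-o(G-S)\bigr)\ge0$. With this inserted, your first route is a complete proof modulo Lovász's theorem.

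Your ``cleaner alternative'' should not be offered as a proof. As literally stated, the set $S$ is wrong. Take $b=1$ and $G=K_{1,3}$. The unique odd factor is $F=G$, and the centre has $F$-degree $3>b$. Each leaf has $F$-degree $1\ge b$ and is reached from the centre by a one-edge alternating trail. So $S=V(G)$ and $o(G-S)=0\le b|S|$, even though the set consisting of the centre alone violates the condition.

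A correct version would have to classify vertices by whether they are reached by trails ending in an $F$-edge or a non-$F$-edge, and would have to handle odd alternating closed trails. That amounts to reproving a parity-factor structure theorem, and the edge counting you allude to is precisely the part that is not done.
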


We first verify that each graph in this family we stated
is indeed an obstruction.

\begin{lemma}\label{lem:Gs-obstruction}
    For every $1\le s\le k$, the graph
    \begin{equation}
        G_s=K_s\nabla\left(K_{n-(b+1)s-1}\cup(bs+1)K_1\right)
    \end{equation}
    contains no odd $[1,b]$-factor.
\end{lemma}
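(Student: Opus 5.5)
We plan to apply \Cref{lem:amahashi} to a single well-chosen set $S$: it suffices to exhibit one $S\subseteq V(G_s)$ with $o(G_s-S)>b|S|$.

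The natural choice is $S=V(K_s)$, the vertex set of the joined clique, so $|S|=s$. Deleting $S$ destroys the join, and $G_s-S$ is the disjoint union $K_{n-(b+1)s-1}\cup(bs+1)K_1$. Each of the $bs+1$ isolated vertices is an odd component.

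It remains to count the clique part, which has $\ell_s\coloneqq n-(b+1)s-1$ vertices. Since $n$ is even and $b+1$ is even, $(b+1)s$ is even, so $\ell_s$ is odd. The restriction $s\le k=\lfloor (n-2)/(b+1)\rfloor$ gives $(b+1)s\le n-2$, hence $\ell_s\ge 1$. Thus $K_{\ell_s}$ is a genuine nonempty odd component, and
\begin{equation}
    o(G_s-S)=bs+2>bs=b|S|.
\end{equation}
By \Cref{lem:amahashi}, $G_s$ has no odd $[1,b]$-factor.

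There is no real obstacle here. The only points to check are that the clique part is nonempty, which is exactly where the bound $s\le k$ is used, and that it has odd order, which uses the parity assumptions on $n$ and $b$. Even without the extra odd component from the clique, the count $bs+1>bs$ would already suffice, so the conclusion is robust.
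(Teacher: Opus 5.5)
Your proposal is correct and is the paper's proof: take $S=V(K_s)$, observe that $r_s=n-(b+1)s-1$ is odd, and count $bs+2>bs=b|S|$ odd components before invoking Amahashi's criterion. Your additional check that $r_s\ge1$ (from $s\le k$), and your remark that the $bs+1$ isolated vertices alone already suffice, are small details the paper leaves implicit.
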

\begin{proof}
    Let $S=V(K_s)$. Then
    \begin{equation}
        G_s-S=K_{r_s}\cup t_sK_1,
        \qquad \text{where }r_s\coloneqq n-(b+1)s-1,
        \,t_s\coloneqq bs+1.
    \end{equation}
    Since $n$ is even and $b+1$ is even, $r_s$ is odd.
    Hence $G_s-S$ has exactly $t_s+1=bs+2$ odd components.
    Therefore
    \begin{equation}
        o(G_s-S)=bs+2>bs=b|S|.
    \end{equation}
    By \Cref{lem:amahashi}, $G_s$ has no odd $[1,b]$-factor.
\end{proof}

The next two lemmas are part of the argument of Chen~et~al.
One can verify that
none of these steps depends on $\alpha\le\frac12$,
so the same proof works for all $\alpha\in[0,1)$.

\begin{lemma}\label{lem:cwh-reduction}
    Let $\alpha\in[0,1)$
    and let $G$ be a connected graph of even order $n$
    with no odd $[1,b]$-factor. Set
    \begin{equation}
        k\coloneqq\left\lfloor\frac{n-2}{b+1}\right\rfloor,
        \qquad
        G_s=K_s\nabla\left(K_{n-(b+1)s-1}\cup(bs+1)K_1\right)
        \quad (1\le s\le k).
    \end{equation}
    Then there exists $s\in\{1,\dots,k\}$ such that
    \begin{equation}
        \ra(G)\le\ra(G_s).
    \end{equation}
    Equivalently,
    \begin{equation}
        \ra(G)\le\max_{1\le s\le k}\ra(G_s).
    \end{equation}
\end{lemma}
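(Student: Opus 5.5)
Starting from the failure of Amahashi's criterion, I would produce a set $S$, then saturate $G$ by adding edges, and finally adjust component sizes until the graph is some $G_s$. Each step should only increase $\ra$.

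\textbf{Step 1: a violating set.} Since $G$ has no odd $[1,b]$-factor, \Cref{lem:amahashi} gives $S\subseteq V(G)$ with $o(G-S)>b|S|$. Let $s=|S|$ and $q=o(G-S)$. Since $n$ is even, $q\equiv n-s\equiv s\pmod 2$. Also $bs\equiv s\pmod 2$ because $b$ is odd. Hence $q\ge bs+2$.

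If $s=0$, then $G$ being connected forces $q\le1$, contradicting $q\ge 2$. So $s\ge1$. Counting vertices gives $n\ge s+q\ge (b+1)s+2$, hence $s\le k$.

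\textbf{Step 2: saturation.} Let $G'$ be the graph on $V(G)$ obtained by making $S$ a clique, joining $S$ completely to $V(G)\setminus S$, and turning each component of $G-S$ into a clique. In addition, absorb every even component, and all but $bs+2$ of the odd components (grouped in pairs, which keeps parities), into one odd component by adding edges.

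Then $G'=K_s\nabla(K_{n_1}\cup\cdots\cup K_{n_{bs+2}})$ with every $n_i$ odd and $\sum n_i=n-s$. $G'$ is obtained from $G$ by adding edges, and $G$ is connected. Since $\Aa$ is nonnegative and irreducible for $\alpha<1$, Perron--Frobenius monotonicity gives $\ra(G)\le\ra(G')$.

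\textbf{Step 3: concentrating the odd parts.} This is the main obstacle. I want to show that $\ra$ does not decrease when vertices are moved from a smaller clique $K_{n_j}$ into the largest clique $K_{n_1}$ in pairs, until all $n_j=1$ for $j\ge2$. This yields exactly $G_s$ with $n_1=n-(b+1)s-1$.

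Let $x$ be the Perron vector of $\Aa(G')$. By symmetry $x$ is constant on each clique, say $x=x_i$ on $K_{n_i}$. First, compare the eigen-equations for vertices in two cliques with $n_1\ge n_j$. This gives $x_1\ge x_j$, because
\[
(\ra-\alpha(n_i-1+s)-(1-\alpha)(n_i-1))\,x_i=(1-\alpha)\sum_{v\in S}x_v,
\]
and the bracket on the left is decreasing in $n_i$.

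Next, take two vertices $u,w$ of $K_{n_j}$. Delete their edges to the other vertices of $K_{n_j}$ and join them to all of $K_{n_1}$. Compute the change in the quadratic form
\[
x^{\tp}\Aa x=\sum_{uv\in E}\bigl(\alpha(x_u^2+x_v^2)+2(1-\alpha)x_ux_v\bigr).
\]
Using $x_1\ge x_j$ and $n_1\ge n_j-2$, I expect this change to be nonnegative. The vector $x$ need not remain an eigenvector of the new matrix, but the Rayleigh quotient still gives $\ra(\text{new})\ge x^{\tp}\Aa(\text{new})x\ge\ra(G')$.

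The delicate point is the bookkeeping. Edges lost inside $K_{n_j}$ number $2(n_j-2)+1$ up to the retained edge $uw$, while edges gained number $2n_1$. The $\alpha$-terms involve squares, so I would argue via $x_1\ge x_j$ and $n_1\ge n_j$. Iterating this move until every other part is a singleton proves $\ra(G)\le\ra(G_s)$ for this $s\in\{1,\dots,k\}$, which is the lemma.
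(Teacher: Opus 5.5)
Your proposal is correct and follows essentially the same route as the argument the paper imports from Chen et al.: Amahashi's criterion gives $S$ with $o(G-S)\ge b|S|+2$ and $1\le |S|\le k$; you then saturate to $K_s\nabla(K_{n_1}\cup\cdots\cup K_{n_{bs+2}})$ with all $n_i$ odd, and move pairs of vertices into the largest clique via a Perron-vector/Rayleigh-quotient comparison. The step you left as an expectation does close: each of the $2n_1$ new edges contributes $\alpha(x_j^2+x_1^2)+2(1-\alpha)x_1x_j\ge 2x_j^2$ to $x^{\tp}\Aa x$, each of the $2(n_j-2)$ deleted edges contributes exactly $2x_j^2$, and $2n_1>2(n_j-2)$, so the quadratic form strictly increases.
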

\begin{proof}
    This is proved in
    \cite[Claims~3.5 and~3.6]{ChenWenHa2024}.
\end{proof}

The next lemma proves the sharpness
of the obstruction graph family $\{G_i\}_{i=1}^k$.

\begin{lemma}\label{lem:sharp-transfer}
    Let $\alpha\in[0,1)$
    and let $G$ be a connected graph of even order $n$
    with no odd $[1,b]$-factor.
    Let $k$ and $G_s$ be as in \Cref{lem:cwh-reduction}.
    If
    \begin{equation}
        \ra(G)\ge\max_{1\le i\le k}\ra(G_i),
    \end{equation}
    then there exists $s\in\{1,\dots,k\}$ such that $G\cong G_s$.
\end{lemma}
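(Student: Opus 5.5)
We would follow the chain of graph operations in the proof of \Cref{lem:cwh-reduction} (Claims~3.5 and~3.6 of \cite{ChenWenHa2024}) and check that each step is either an equality or a strict increase of $\ra$ unless the graph already has the claimed form. Two facts drive this. First, for connected $G$ and $\alpha\in[0,1)$, $\Aa(G)$ is nonnegative and irreducible. Second, if $G$ is connected and $G'=G+e$ for a non-edge $e$, then $\Aa(G')-\Aa(G)$ is a nonzero nonnegative matrix, so by Perron--Frobenius $\ra(G')>\ra(G)$.

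First, by \Cref{lem:amahashi} we pick $S\subseteq V(G)$ with $o(G-S)\ge b|S|+1$. Since $n$ is even and $b$ is odd, $o(G-S)\equiv |S|\equiv b|S| \pmod 2$ (because $n-|S|\equiv o(G-S)\pmod 2$). Hence $o(G-S)\ge b|S|+2$. Connectedness forces $s\coloneqq|S|\ge1$, since for $S=\emptyset$ we would have $o(G)=0$ because $n$ is even.

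Next, let $G^\ast$ be obtained from $G$ by:
\begin{itemize}
\item adding all edges inside $S$ and from $S$ to $V(G)\setminus S$;
\item making each odd component complete;
\item deleting even components by merging them into an odd component, adding edges only;
\item merging odd components in excess of $bs+2$ into one component, adding edges only and preserving parity.
\end{itemize}
These operations produce $K_s\nabla(K_{n_1}\cup\cdots\cup K_{n_{bs+2}})$ with all $n_i$ odd. Every step only adds edges to a connected graph, so $\ra(G)\le\ra(G^\ast)$, with equality iff $G=G^\ast$.

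Chen et al.'s second step moves vertices so that all but one of the odd cliques become singletons, which gives $G_s$. This step is a transfer, not pure edge addition. Moving two vertices from $K_{n_i}$ to $K_{n_j}$ with $n_j\ge n_i\ge3$ can be analysed through the Perron vector $x$. By symmetry, $x$ is constant on each clique, with values $x_i$ on $K_{n_i}$ and $x_j\ge x_i$ on $K_{n_j}$. For the Kelmans-type move, the Rayleigh quotient strictly increases, so $\ra$ strictly increases. Iterating, $\ra(G^\ast)\le\ra(G_s)$ with equality iff $G^\ast\cong G_s$. The bound $s\le k$ follows from the vertex count $n\ge s+bs+2$ together with the parity of $n$.

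Finally, combine the two inequalities. The hypothesis gives $\ra(G)\ge\max_i\ra(G_i)\ge\ra(G_s)\ge\ra(G^\ast)\ge\ra(G)$, so all are equalities. The equality cases then give $G=G^\ast\cong G_s$.

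The main obstacle is the strictness in the vertex-moving step. Rayleigh-quotient arguments give $\ra(G_{\text{new}})\ge x^\tp\Aa(G_{\text{new}})x>x^\tp\Aa(G^\ast)x$. Strictness here needs the quadratic-form difference to be positive, and for $\alpha>1/2$ the degree terms matter. Moving a vertex $v$ from $K_{n_i}$ to $K_{n_j}$ changes $x^\tp\Aa x$ by
\[
\alpha\big((n_j-n_i+1)x_v^2+\textstyle\sum(x_u^2\text{ changes})\big)+2(1-\alpha)x_v\big(\textstyle\sum_{K_{n_j}}x-\sum_{K_{n_i}\setminus v}x\big).
\]
We would show this is positive using $x_j\ge x_i$ and $n_j\ge n_i$, with every term nonnegative and at least one term positive. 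If that computation is messy, an alternative is the equitable quotient matrix: compare the characteristic polynomials of the quotients, which directly gives strict inequality.
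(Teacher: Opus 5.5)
Your proposal is correct and takes the same route as the paper: the hypothesis forces equality throughout the reduction chain behind \Cref{lem:cwh-reduction}, and the strictness of each step (edge addition in a connected graph, then moving vertices from a smaller clique to a larger one) gives $G\cong G_s$; the paper just cites the equality clauses of Chen et al.'s Claims~3.4--3.6, whereas you rederive them. The step you flag as the main obstacle is routine: symmetry only gives constancy of $x$ on cliques, but the eigen-equation $(\rho-\alpha s-n_i+1)x_i=(1-\alpha)\sum_{S}x$ gives $x_j\ge x_i$ whenever $n_j\ge n_i$, and then in $x^{\tp}\Aa x=\sum_{uw\in E}\bigl(\alpha(x_u^2+x_w^2)+2(1-\alpha)x_ux_w\bigr)$ each of the $n_j$ added edges contributes at least $2x_i^2$ while each of the $n_i-1$ removed edges contributes exactly $2x_i^2$, so the Rayleigh quotient strictly increases for every $\alpha\in[0,1)$.
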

\begin{proof}
    This condition forces equality in \Cref{lem:cwh-reduction},
    and the equality clauses in \cite[Claims~3.4--3.6]{ChenWenHa2024}
    therefore give $G\cong G_s$
    for some $s$ with $\ra(G_s)=\max_{1\le i\le k}\ra(G_i)$.
\end{proof}

For later convenience, we fix the following notation.

\begin{definition}\label{def:notation}
    Define
    \begin{equation}
        k\coloneqq\left\lfloor\frac{n-2}{b+1}\right\rfloor,
        \qquad
        r_s\coloneqq n-(b+1)s-1,
        \qquad
        t_s\coloneqq bs+1,
        \qquad
        \Ls(\alpha)\coloneqq\ra(G_s),
    \end{equation}
    where $G_s$ is given by \eqref{eq:introGs}.
    With this notation, $G_s=K_s\nabla(K_{r_s}\cup t_sK_1)$.
    For later convenience, we also write
    \begin{equation}
        \ell\coloneqq r_k=n-(b+1)k-1.
    \end{equation}
    It holds that $1\le \ell\le b$ and $\ell$ is odd.
\end{definition}

\section{Quotient matrices and estimates for the spectral radii}\label{sec:quotient}
This section reduces $\ra(G_s)$
to the Perron root of a $3\times 3$ quotient matrix.
Moreover, to compare $\ra(G_1)$ and $\ra(G_k)$,
we obtain two expansions of $\Lambda_s$: one for fixed $s$,
and one for the case $\epsilon n\le s\le k$.

We start by introducing the concept of a \emph{quotient matrix}.

\begin{definition}
    Let $M=(m_{uv})$ be an $n\times n$ matrix
    whose rows and columns are indexed by a finite set $X$, and let
    $\pi=\{X_1,X_2,\dots,X_t\}$ be a partition of $X$.
    We say that $\pi$ is an \emph{equitable partition} of $M$
    if, for every $i,j\in\{1,2,\dots,t\}$, the quantity
    \begin{equation}
        b_{ij}=\sum_{v\in X_j}m_{uv}
    \end{equation}
    is independent of the choice of $u\in X_i$.
    In this case, the $t\times t$ matrix $M/\pi\coloneqq(b_{ij})$
    is called the \emph{quotient matrix} of $M$
    with respect to $\pi$.
\end{definition}

An important property of quotient matrices is that
if the original matrix is nonnegative and irreducible,
then the quotient matrix has the same spectral radius
as the original matrix.

\begin{lemma}[{\cite[Theorem~2.3 and 2.5]{YouYangSoXi2019}}]\label{lem:quotient-spectrum}
    Let $M/\pi$ be the quotient matrix of $M$
    with respect to $\pi$. Then
    \begin{equation}
        \spec(M/\pi)\subseteq\spec(M).
    \end{equation}
    Moreover, if $M$ is nonnegative and irreducible, then
    \begin{equation}
        \rho(M/\pi)=\rho(M).
    \end{equation}
\end{lemma}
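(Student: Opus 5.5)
The statement is a cited result (Theorems 2.3 and 2.5 of You, Yang, So and Xi), and the plan is to reproduce its standard proof via the characteristic matrix of the partition. Let $P$ be the $n\times t$ matrix whose $j$-th column is the indicator vector of $X_j$. Equitability of $\pi$ says exactly that for $u\in X_i$ the $(u,j)$ entry of $MP$ equals $b_{ij}$, i.e.
\begin{equation*}
    MP=P\,(M/\pi).
\end{equation*}
The columns of $P$ have disjoint nonempty supports, so $P$ has full column rank $t$. This single identity drives both parts.

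For the spectral inclusion, take an eigenpair $(M/\pi)x=\lambda x$ with $x\neq 0$. Then $M(Px)=P(M/\pi)x=\lambda Px$. Since $P$ has full column rank, $Px\neq 0$, so $\lambda\in\spec(M)$. If multiplicities are wanted as well, I would argue that $\operatorname{col}(P)$ is an $M$-invariant subspace. In a basis extending the columns of $P$, $M$ is block upper triangular with $M/\pi$ as a diagonal block, so the characteristic polynomial of $M/\pi$ divides that of $M$.

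For the spectral radius, assume $M\ge 0$ is irreducible. By Perron--Frobenius, $M^{\tp}$ has an eigenvector $y>0$ for the eigenvalue $\rho(M)$. Transposing the identity gives $P^{\tp}M^{\tp}=(M/\pi)^{\tp}P^{\tp}$, hence
\begin{equation*}
    (M/\pi)^{\tp}(P^{\tp}y)=P^{\tp}M^{\tp}y=\rho(M)\,P^{\tp}y.
\end{equation*}
Here $P^{\tp}y$ has as its entries the block sums of $y$, so it is strictly positive. Thus $\rho(M)$ is an eigenvalue of $(M/\pi)^{\tp}$, and therefore of $M/\pi$. Combined with the inclusion this yields $\rho(M/\pi)\ge\rho(M)\ge\rho(M/\pi)$, so equality holds. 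Note that $M/\pi$ is automatically nonnegative, since its entries are sums of entries of $M$.

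The only step needing care is this last one, namely obtaining a positive vector for the quotient. Working with the left Perron vector avoids having to prove that $M/\pi$ is itself irreducible. Alternatively, one could apply the Collatz--Wielandt characterization to $M/\pi$ with the positive vector $P^{\tp}y$.
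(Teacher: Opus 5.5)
Your proof is correct. The paper does not prove this lemma at all; it only cites Theorems~2.3 and~2.5 of You, Yang, So and Xi. So the comparison is between your self-contained argument and a bare citation, and your argument is the standard one.

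For the inclusion, the identity $MP=P(M/\pi)$ for the characteristic matrix $P$ does the work. Your invariant-subspace refinement also gives divisibility of the characteristic polynomials. For the reverse direction, pushing a left Perron vector through $P^{\tp}$ shows $\rho(M)\in\spec(M/\pi)$, which yields equality of the spectral radii.

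Irreducibility is not actually needed for the second part. For any nonnegative $M$, the weak Perron--Frobenius theorem gives a nonzero $y\ge 0$ with $M^{\tp}y=\rho(M)y$. Then $P^{\tp}y\neq 0$, because its entries sum to $\sum_v y_v>0$. So the same argument gives $\rho(M/\pi)=\rho(M)$ for every nonnegative $M$.

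In the paper's only application, $A_\alpha(G)$ with $G$ connected and $\alpha<1$, irreducibility holds anyway. Your version covers exactly what is used.
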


We show that an equitable partition for $A(G)$
is also equitable for $\Aa(G)$.

\begin{lemma}\label{lem:equitable-Aalpha}
    Let $G$ be a graph, $A(G)$ be its adjacency matrix, and
    let $\pi=\{V_1,\dots,V_t\}$
    be an equitable partition of $A(G)$.
    Let $A(G)/\pi\eqqcolon B(G)=(b_{ij})$
    be the quotient matrix of $A(G)$ with respect to $\pi$.
    Then $\pi$ is also equitable for $\Aa(G)$,
    and the corresponding quotient matrix is
    \begin{equation}
        \Aa(G)/\pi=\alpha\Delta+(1-\alpha)B(G)\eqqcolon B_{\alpha}(G),
    \end{equation}
    where $\Delta=\diag(d_1,\dots,d_t)$, and
    \begin{equation}
        d_i=\sum_{j=1}^t b_{ij}
    \end{equation}
    are the row sums of $B$.
    Moreover, if $G$ is connected and $\alpha<1$, then
    \begin{equation}
        \rho(B_{\alpha}(G))=\rho(\Aa(G)).
    \end{equation}
\end{lemma}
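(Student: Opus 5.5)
The plan is to verify equitability of $\pi$ for $\Aa(G)$ directly from the definitions, and then to obtain the spectral-radius equality from \Cref{lem:quotient-spectrum}.

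\emph{Equitability.} Fix $u\in V_i$ and $j\in\{1,\dots,t\}$, and write $A=A(G)$, $D=D(G)$. Then
\begin{equation}
    \sum_{v\in V_j}(\Aa(G))_{uv}
    =\alpha\sum_{v\in V_j}D_{uv}+(1-\alpha)\sum_{v\in V_j}A_{uv}.
\end{equation}
The second sum equals $b_{ij}$ by equitability of $\pi$ for $A$. Since $D$ is diagonal, the first sum equals $\deg(u)$ if $j=i$ and $0$ if $j\ne i$. The degree of $u$ is
\begin{equation}
    \deg(u)=\sum_{v\in V(G)}A_{uv}=\sum_{j'=1}^t\sum_{v\in V_{j'}}A_{uv}=\sum_{j'=1}^t b_{ij'}=d_i,
\end{equation}
which depends only on $i$ and not on the choice of $u\in V_i$. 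Hence every block sum equals $\alpha\,\delta_{ij}d_i+(1-\alpha)b_{ij}$, and this is independent of $u$. Therefore $\pi$ is equitable for $\Aa(G)$, with quotient matrix $\alpha\Delta+(1-\alpha)B(G)=B_\alpha(G)$.

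\emph{Spectral radius.} $\Aa(G)$ is nonnegative because $\alpha\in[0,1]$ and both $D$ and $A$ are nonnegative. For irreducibility, note that when $\alpha<1$ the off-diagonal entries of $\Aa(G)$ are $(1-\alpha)A_{uv}$, so $\Aa(G)$ has exactly the same off-diagonal zero pattern as $A(G)$. Irreducibility of a nonnegative matrix depends only on the digraph of its off-diagonal nonzero entries. Here that digraph is the connected graph $G$ with each edge taken in both directions, so it is strongly connected, and $\Aa(G)$ is irreducible. By \Cref{lem:quotient-spectrum}, $\rho(B_\alpha(G))=\rho(\Aa(G))$.

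No step presents a real obstacle. The only points needing care are that the row sums $d_i$ of $B$ coincide with the common degree of the vertices in $V_i$, and that the hypothesis $\alpha<1$ is exactly what preserves irreducibility: at $\alpha=1$ the matrix $\Aa(G)=D(G)$ is diagonal and hence reducible.
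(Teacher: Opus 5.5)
Your proposal is correct and follows the paper's proof essentially line by line. Both arguments compute the block sums as $\alpha\delta_{ij}d_i+(1-\alpha)b_{ij}$, using that every vertex of $V_i$ has degree $d_i=\sum_j b_{ij}$, and both get the spectral-radius equality from \Cref{lem:quotient-spectrum}, with your off-diagonal zero-pattern argument for the irreducibility of $\Aa(G)$ filling in a step the paper only asserts.
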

\begin{proof}
    Let $u\in V_i$.
    Since $\pi$ is equitable for $A(G)$,
    the number of neighbors of $u$ in $V_j$ is $b_{ij}$,
    which is independent of the choice of $u\in V_i$.
    Hence every vertex in $V_i$ has the same degree
    \begin{equation}
        d_i=\sum_{j=1}^t b_{ij}.
    \end{equation}
    Therefore, for $u\in V_i$,
    the sum of the entries of the row of $\Aa(G)$
    indexed by $u$ over the block $V_j$ is
    \begin{equation}
        \sum_{v\in V_j}(\Aa(G))_{uv}=
        \begin{cases}
            \alpha d_i+(1-\alpha)b_{ii}, & j=i,\\
            (1-\alpha)b_{ij}, & j\ne i.
        \end{cases}
    \end{equation}
    This depends only on $i$ and $j$, so $\pi$ is also equitable
    for $\Aa(G)$. The corresponding quotient matrix is
    \begin{equation}
        \Aa(G)/\pi=\alpha\Delta+(1-\alpha)B(G)=B_{\alpha}(G).
    \end{equation}
    If $G$ is connected and $\alpha<1$, then $\Aa(G)$ is nonnegative
    and irreducible, so \Cref{lem:quotient-spectrum} gives
    \begin{equation}
        \rho(B_{\alpha}(G))=\rho(\Aa(G)).
    \end{equation}
    This completes the proof.
\end{proof}

To obtain an equitable partition for $A(G_s)$,
we fix $1\le s\le k$ and partition
$V(G_s)=V\left(K_s\nabla\left(K_{r_s}\cup t_sK_1\right)\right)$
into three parts:
\begin{equation}
    V_1=V(K_s),\qquad V_2=V(K_{r_s}),\qquad V_3=V(t_sK_1).
\end{equation}
One can verify that
the partition $\{V_1,V_2,V_3\}$ is equitable for $A(G_s)$,
hence also equitable for $\Aa(G_s)$ by \Cref{lem:equitable-Aalpha}.
For later convenience, write
\begin{equation}\label{eq:delta-uvw}
    \beta\coloneqq 1-\alpha,
    \qquad u_s\coloneqq n-1-\beta(n-s),
    \qquad v_s\coloneqq n-bs-2-\beta s,
    \qquad w_s\coloneqq s-\beta s.
\end{equation}
A direct computation shows that the corresponding quotient matrix is
\begin{equation}
    B_s\coloneqq B_\alpha(G_s)=\begin{bmatrix}
        u_s & \beta r_s & \beta t_s\\
        \beta s & v_s & 0\\
        \beta s & 0 & w_s
    \end{bmatrix}.
\end{equation}
By setting $P_s=\diag(\sqrt{s},\sqrt{r_s},\sqrt{t_s})$,
$B_s$ is similar and thus cospectral to the real symmetric matrix
\begin{equation}
    \widetilde{B}_s=P_s B_s P_s^{-1}=
    \begin{bmatrix}
    u_s & \beta\sqrt{s r_s} & \beta\sqrt{s t_s}\\
    \beta\sqrt{s r_s} & v_s & 0\\
    \beta\sqrt{s t_s} & 0 & w_s
    \end{bmatrix}.
\end{equation}
We then have
\begin{equation}
    \Ls(\alpha)\coloneqq\ra(G_s)=\rho(B_s)=\rho(\widetilde{B}_s).
\end{equation}

We now derive the characteristic equation
satisfied by $\rho(\widetilde{B}_s)$.

\begin{lemma}\label{lem:schur}
    For $1\le s\le k$,
    $\Ls=\Ls(\alpha)$ satisfies
    \begin{equation}\label{eq:schur-poly}
        (\Ls-u_s)(\Ls-v_s)(\Ls-w_s)-\beta^2 s\left[r_s(\Ls-w_s)+t_s(\Ls-v_s)\right]=0.
    \end{equation}
    Moreover, we have
    \begin{equation}\label{eq:schur}
        \Ls=u_s+\beta^2 s\left(\frac{r_s}{\Ls-v_s}+\frac{t_s}{\Ls-w_s}\right).
    \end{equation}
\end{lemma}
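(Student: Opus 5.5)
The plan is to expand the characteristic polynomial of the symmetric matrix $\widetilde{B}_s$, which is cospectral with $B_s$. Since $\Ls=\rho(\widetilde{B}_s)$ is an eigenvalue, it is a root of $\det(\lambda I-\widetilde{B}_s)$. Expanding this $3\times 3$ determinant along the first row will give \eqref{eq:schur-poly}. The two zero off-diagonal entries mean only three terms survive:
\begin{equation*}
\det(\lambda I-\widetilde{B}_s)=(\lambda-u_s)(\lambda-v_s)(\lambda-w_s)-\beta^2 s r_s(\lambda-w_s)-\beta^2 s t_s(\lambda-v_s).
\end{equation*}
The products of off-diagonal entries are $\beta^2 s r_s$ and $\beta^2 s t_s$. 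Substituting $\lambda=\Ls$ and factoring out $\beta^2 s$ yields \eqref{eq:schur-poly}.

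For \eqref{eq:schur}, the plan is to divide \eqref{eq:schur-poly} by $(\Ls-v_s)(\Ls-w_s)$. This is a Schur-complement form, so the only real step is to show $\Ls>v_s$ and $\Ls>w_s$.

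The quick route uses Cauchy interlacing, or Perron--Frobenius, on $\widetilde{B}_s$. The Perron root is at least every diagonal entry, which gives $\Ls\ge\max\{u_s,v_s,w_s\}$. Strict inequality needs a little more care. I would argue it directly: $\widetilde{B}_s$ is nonnegative and irreducible, since $\beta>0$, $s\ge1$, $r_s\ge1$ and $t_s\ge1$. Deleting row and column $1$ gives the diagonal principal submatrix $\diag(v_s,w_s)$. For an irreducible nonnegative matrix, the spectral radius strictly exceeds that of any proper principal submatrix, so $\Ls>\max\{v_s,w_s\}$.

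An elementary alternative is also available. Set $f(\lambda)=(\lambda-u_s)(\lambda-v_s)(\lambda-w_s)-\beta^2 s[r_s(\lambda-w_s)+t_s(\lambda-v_s)]$. Since $\beta^2 s r_s>0$, we get $f(v_s)=-\beta^2 s t_s(v_s-w_s)$ and $f(w_s)=-\beta^2 s r_s(w_s-v_s)$. One of these is $\le0$, and it is strictly negative unless $v_s=w_s$; in that case $f(v_s)=0$, but $v_s$ is not the largest root. Because $f(\lambda)\to+\infty$, the largest root then exceeds $\max\{v_s,w_s\}$. I'd use the Perron–Frobenius subprincipal argument as the main justification, since it is cleaner.

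Once $\Ls-v_s>0$ and $\Ls-w_s>0$, dividing \eqref{eq:schur-poly} by $(\Ls-v_s)(\Ls-w_s)$ gives
\begin{equation*}
\Ls-u_s=\beta^2 s\left(\frac{r_s}{\Ls-v_s}+\frac{t_s}{\Ls-w_s}\right),
\end{equation*}
which is \eqref{eq:schur}. The only mildly delicate point is the strict inequality $\Ls>\max\{v_s,w_s\}$, which is needed so the denominators do not vanish; the determinant expansion itself is routine.
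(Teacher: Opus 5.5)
Your proposal is correct and matches the paper's proof: \eqref{eq:schur-poly} is the characteristic equation of $\widetilde{B}_s$, and the Perron--Frobenius strict inequality $\Ls>\max\{v_s,w_s\}$ allows dividing by $(\Ls-v_s)(\Ls-w_s)$ to get \eqref{eq:schur}; your justification via the proper principal submatrix $\diag(v_s,w_s)$ is a slightly more explicit version of the paper's one-line appeal. One small slip in your optional elementary alternative: in fact $f(v_s)=-\beta^2 s\,r_s(v_s-w_s)$ and $f(w_s)=-\beta^2 s\,t_s(w_s-v_s)$, so you have $r_s$ and $t_s$ swapped, but this does not affect the sign argument.
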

\begin{proof}
    \eqref{eq:schur-poly}
    is simply the characteristic equation $\det(\Ls I-\widetilde{B}_s)=0$.
    Moreover, since $\widetilde{B}_s$ is nonnegative and irreducible,
    by the Perron--Frobenius theorem,
    its Perron root, which is exactly its spectral radius $\Ls$,
    satisfies $\Ls>\max\{u_s,v_s,w_s\}$.
    Therefore we may divide
    \eqref{eq:schur-poly} by $(\Ls-v_s)(\Ls-w_s)$
    and get \eqref{eq:schur}.
\end{proof}

This gives an upper bound for $\Ls$
when $u_s>\max\{v_s,w_s\}$,
which will be useful later.

\begin{lemma}\label{lem:Lambda-upper-bound}
    Write
    \begin{equation}
        \Lambda_s^{+}\coloneqq
        u_s+\beta^2s\left(\frac{r_s}{u_s-v_s}
        +\frac{t_s}{u_s-w_s}\right).
    \end{equation}
    Then if $u_s-v_s>0$ and $u_s-w_s>0$,
    it holds that $\Ls\le\Lambda_s^{+}$.
\end{lemma}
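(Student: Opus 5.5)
The approach is to read the bound off the secular equation \eqref{eq:schur} of \Cref{lem:schur}, using only the fact that $\Ls$ is the Perron root of $\widetilde{B}_s$. By the Perron--Frobenius argument already used in the proof of \Cref{lem:schur}, we have $\Ls>\max\{u_s,v_s,w_s\}$. Under the hypotheses $u_s-v_s>0$ and $u_s-w_s>0$, this inequality gives
\[
\Ls-v_s> u_s-v_s>0,\qquad \Ls-w_s> u_s-w_s>0 .
\]
Hence the reciprocals satisfy
\[
0<\frac{1}{\Ls-v_s}<\frac{1}{u_s-v_s},\qquad 0<\frac{1}{\Ls-w_s}<\frac{1}{u_s-w_s}.
\]

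Next, multiply these inequalities by the nonnegative quantities $r_s$ and $t_s$. Both are positive, since $s\le k$ forces $r_s\ge 1$, and $t_s=bs+1\ge 1$. Also multiply by $\beta^2 s\ge 0$. Adding $u_s$ and substituting into \eqref{eq:schur} gives
\[
\Ls=u_s+\beta^2 s\Bigl(\frac{r_s}{\Ls-v_s}+\frac{t_s}{\Ls-w_s}\Bigr)\le u_s+\beta^2 s\Bigl(\frac{r_s}{u_s-v_s}+\frac{t_s}{u_s-w_s}\Bigr)=\Lambda_s^{+}.
\]
The inequality is strict when $\beta>0$ and becomes an equality when $\beta=0$.

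The only point needing care is the strict inequality $\Ls>\max\{u_s,v_s,w_s\}$. I would justify it as follows. $\widetilde{B}_s$ is symmetric, nonnegative and irreducible whenever $\beta>0$, so $\Ls\ge$ each diagonal entry by the Rayleigh quotient at standard basis vectors. Strictness then follows because the off-diagonal entries in each row are nonzero: if $\Ls$ equalled a diagonal entry, the corresponding basis vector would be an eigenvector, which is impossible. In the degenerate case $\beta=0$, the matrix is diagonal and $\Ls=u_s=\Lambda_s^{+}$ directly. Beyond this, there is no real obstacle: the lemma is a monotonicity observation on \eqref{eq:schur}.
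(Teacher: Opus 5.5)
Your proposal is correct and is essentially the paper's argument. The paper packages your termwise reciprocal comparison as the strict decrease of $f_s(x)=u_s+\beta^2 s\bigl(\frac{r_s}{x-v_s}+\frac{t_s}{x-w_s}\bigr)$ on $x>\max\{v_s,w_s\}$, and concludes $\Lambda_s=f_s(\Lambda_s)\le f_s(u_s)=\Lambda_s^{+}$ from the same Perron--Frobenius fact $\Lambda_s>u_s$; your Rayleigh-quotient justification of that strict inequality and your separate $\beta=0$ remark are a valid, slightly more self-contained supplement.
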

\begin{proof}
    Define the function
    \begin{equation}
        f_s(x)\coloneqq u_s+\beta^2 s\left(\frac{r_s}{x-v_s}+\frac{t_s}{x-w_s}\right),
        \qquad x>\max\{v_s,w_s\}.
    \end{equation}
    By \Cref{lem:schur}, $\Ls=f_s(\Ls)$.
    Moreover,
    \begin{equation}
        f_s'(x)=-\beta^2 s\left(\frac{r_s}{(x-v_s)^2}+\frac{t_s}{(x-w_s)^2}\right)<0,
    \end{equation}
    so $f_s$ is strictly decreasing on its domain.

    If $u_s-v_s>0$ and $u_s-w_s>0$,
    then $u_s>\max\{v_s,w_s\}$,
    so $u_s$ lies in the domain of $f_s$.
    By \Cref{lem:schur}, we have $\Ls>u_s$.
    Therefore the monotonicity of $f_s$ yields
    \begin{equation}
        \Ls=f_s(\Ls)\le f_s(u_s)=\Lambda_s^{+}.
    \end{equation}
    This completes the proof.
\end{proof}

We next record two estimates of $\Lambda_s$:
one for fixed $s$, and one for the range $\epsilon n\le s\le k$.

\begin{theorem}\label{thm:schur-asymptotics}
    Fix an odd integer $b\ge 3$ and let $n\to\infty$
    through even integers. Set $c=n(1-\alpha)$.
    Then the following estimates hold.
    \begin{enumerate}
        \item Fix a positive integer $s$,
        and let $I$ be a fixed compact interval contained in
        $(bs+1,+\infty)$. Then, uniformly for $c\in I$,
        \begin{equation}
            \Ls\left(1-\frac{c}{n}\right)
            =n-bs-2-\frac{cs}{n}+\frac{c^2s}{(c-bs-1)n}+O(n^{-2}).
        \end{equation}
        The constant in the $O(n^{-2})$ term may depend
        on $b$, $s$, and $I$, but is independent of $c$ and $n$.
        \item Fix $\epsilon\in(0,\frac{1}{b+1})$.
        If $\epsilon n\le s\le k$,
        then, uniformly for $c$ in any fixed bounded interval
        $J\subset(0,+\infty)$,
        \begin{equation}\label{eq:theta-expansion}
            \Ls\left(1-\frac{c}{n}\right)
            =n-1-c+\frac{cs}{n}
            +\frac{c^2}{n}h_b\left(\frac{s}{n}\right)+O(n^{-2}),
        \end{equation}
        where
        \begin{equation}
            h_b(\theta)\coloneqq\frac{1-(b+1)\theta}{b}
            +\frac{b\theta^2}{1-\theta}.
        \end{equation}
        The constant in the $O(n^{-2})$ term may depend
        on $b$, $\epsilon$, and $J$,
        but is independent of $c$, $s$, and $n$.
    \end{enumerate}
\end{theorem}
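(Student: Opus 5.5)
The plan is to work directly with the fixed-point equation \eqref{eq:schur} of \Cref{lem:schur}, substituting $\beta=1-\alpha=c/n$. Here $\Ls(1-c/n)$ denotes $\Ls$ evaluated at $\alpha=1-c/n$. With this substitution the entries \eqref{eq:delta-uvw} become
\begin{equation*}
u_s=n-1-c+\frac{cs}{n},\qquad v_s=n-bs-2-\frac{cs}{n},\qquad w_s=s-\frac{cs}{n},
\end{equation*}
so that
\begin{equation*}
u_s-v_s=bs+1-c+\frac{2cs}{n},\qquad u_s-w_s=n-1-c-s+\frac{2cs}{n}.
\end{equation*}
The two regimes differ in which diagonal entry dominates, so the root $\Ls$ is localized near a different entry in each case.

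\emph{Part (2).} Since $\epsilon n\le s\le k<n/(b+1)$ and $c\in J$ is bounded, we have $u_s-v_s\ge b\epsilon n-O(1)$ and $u_s-w_s\ge n(1-\tfrac1{b+1})-O(1)$. Both are positive and of order $n$ for large $n$, uniformly in $s$ and $c$. Hence \Cref{lem:Lambda-upper-bound} applies, and together with $\Ls>u_s$ from \Cref{lem:schur} it gives $u_s<\Ls\le\Lambda_s^+$.

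Next I compute $\Lambda_s^+-u_s$. The first term is
\begin{equation*}
\beta^2 s\,\frac{r_s}{u_s-v_s}=\frac{c^2}{n^2}\cdot\frac{s\,r_s}{bs+O(1)}=\frac{c^2}{n}\cdot\frac{1-(b+1)\theta}{b}+O(n^{-2}).
\end{equation*}
The second term is
\begin{equation*}
\beta^2 s\,\frac{t_s}{u_s-w_s}=\frac{c^2}{n^2}\cdot\frac{s(bs+1)}{n(1-\theta)+O(1)}=\frac{c^2}{n}\cdot\frac{b\theta^2}{1-\theta}+O(n^{-2}).
\end{equation*}
Here $\theta=s/n\in[\epsilon,\tfrac1{b+1}]$, so all denominators are bounded away from $0$ and the $O$-constants are uniform in $s$ and $c$. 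Thus $\Ls-u_s=O(1/n)$.

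Feeding this back into \eqref{eq:schur}, the denominators become $\Ls-v_s=(u_s-v_s)(1+O(n^{-2}))$ and similarly for $\Ls-w_s$. The correction term therefore equals $\Lambda_s^+-u_s$ up to a relative error $O(n^{-2})$, hence an absolute error $O(n^{-3})$. This yields \eqref{eq:theta-expansion}.

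\emph{Part (1).} Now $s$ is fixed and $c\in I\subset(bs+1,\infty)$. Then $u_s-v_s=-(c-bs-1)+O(1/n)$ is negative and bounded away from $0$, so $v_s$ is the largest diagonal entry and the root sits just above $v_s$.

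Write $\Ls=v_s+x$ with $x>0$. Using $\beta^2 s\,r_s=c^2s/n+O(n^{-2})$ and $\beta^2 s\,t_s/(\Ls-w_s)=O(n^{-3})$, since $\Ls-w_s\asymp n$, the equation \eqref{eq:schur} becomes
\begin{equation*}
x=-(c-bs-1)+\frac{c^2s}{n\,x}+O(n^{-2})\qquad\text{(with the exact $O(1/n)$ part of $u_s-v_s$ kept).}
\end{equation*}
The heuristic solution is $x_0=\dfrac{c^2s}{(c-bs-1)n}$, which gives exactly the claimed expansion $v_s+x_0$.

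To make this rigorous, I consider $g(x)=x-(u_s-v_s)-\beta^2 s\bigl(r_s/x+t_s/(x+v_s-w_s)\bigr)$. This function is strictly increasing on $x>0$, and $\Ls-v_s$ is its unique positive zero. I will check that $g(x_0-K/n^2)<0<g(x_0+K/n^2)$ for a suitable constant $K=K(b,s,I)$. The sign is controlled because near $x_0$ one has $x\cdot(\text{stuff})$ with $x\asymp 1/n$, so a shift of $K/n^2$ in $x$ changes the term $c^2s/(nx)$ by about $K(c-bs-1)^2/(c^2 s\,n^2)\cdot$ const. This dominates the $O(n^{-2})$ remainders once $K$ is large.

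The main obstacle is this bracketing step in Part (1). Because $x$ itself is $O(1/n)$, the perturbation analysis is delicate: I must multiply through by $x$ and keep the $2cs/n$ term of $u_s-v_s$ consistently, and I must verify that the error constants stay uniform over the compact interval $I$. This is where the hypothesis that $I$ stays away from $bs+1$ enters.
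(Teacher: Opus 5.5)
Your proposal is correct and is essentially the paper's argument: Part (2) is the same localization $u_s<\Lambda_s\le\Lambda_s^+$ followed by feeding $\Lambda_s-u_s=O(1/n)$ back into the denominators, and in Part (1) your monotone bracketing of the unique positive zero of $g$ is only a variant of the paper's two-step closing (first $r^2+\tfrac{\eta}{4}r\le C/n$, then $\Delta_s r=c^2s/n+O(n^{-2})$). One bookkeeping slip, which does not affect the conclusion: before you multiply by $x\asymp 1/n$, the effect of the shift $\pm K/n^2$ and the remainders are both of order $1/n$, not $n^{-2}$; this includes your displayed heuristic equation, where replacing $\beta^2 s r_s$ by $c^2s/n$ costs $O(n^{-2})/x=O(1/n)$. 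Since both quantities scale the same way, the comparison still goes through.
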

\begin{proof}
    Recall from \eqref{eq:schur} that, after setting $\beta=c/n$,
    \begin{equation}\label{eq:schur-c}
        \Ls=u_s+\frac{c^2s}{n^2}
        \left(\frac{r_s}{\Ls-v_s}+\frac{t_s}{\Ls-w_s}\right).
    \end{equation}

    We first prove the first estimate. Set
    \begin{equation}\label{eq:delta-s}
        \Delta_s\coloneqq c-bs-1>0,
    \end{equation}
    and write $\Ls=v_s+r$ with $r>0$. Since
    $u_s-v_s=-(c-bs-1)+O(n^{-1})$, \eqref{eq:schur-c} gives
    \begin{equation}\label{eq:schur-r-c-2}
        r^2+\left(\Delta_s-\frac{2cs}{n}\right)r
        =\frac{c^2sr_s}{n^2}
        +\frac{c^2st_s\,r}{n^2(v_s+r-w_s)}.
    \end{equation}
    Here
    \begin{equation}
        r_s=n+O(1),\qquad t_s=O(1),\qquad v_s-w_s=n+O(1).
    \end{equation}
    Since $I$ is compact and disjoint from $bs+1$,
    there exists $\eta>0$ such that $\Delta_s\ge\eta$ on $I$.
    Thus, for all large $n$,
    \begin{equation}
        r^2+\frac{\eta}{4}r\le \frac{C}{n},
    \end{equation}
    and hence $r=O(n^{-1})$. Returning to \eqref{eq:schur-r-c-2},
    we get
    \begin{equation}
        \Delta_s r=\frac{c^2s}{n}+O(n^{-2}),
    \end{equation}
    and therefore
    \begin{equation}
        r=\frac{c^2s}{(c-bs-1)n}+O(n^{-2}).
    \end{equation}
    Since $v_s=n-bs-2-cs/n$, the first estimate follows.

    We now prove the second estimate.
    Let $c$ range in a fixed bounded
    interval $J\subset(0,+\infty)$ and put
    \begin{equation}
        \theta_s\coloneqq\frac{s}{n}
        \in\left[\epsilon,\frac{1}{b+1}\right].
    \end{equation}
    Write $\Ls=u_s+r$ with $r>0$, and define
    \begin{equation}
        \Gamma_s\coloneqq u_s-v_s=b\theta_sn-c+1+2c\theta_s,
        \qquad
        \Omega_s\coloneqq u_s-w_s=(1-\theta_s)n-c-1+2c\theta_s.
    \end{equation}
    Both $\Gamma_s$ and $\Omega_s$ are bounded
    from above and below
    by positive constant multiples of $n$,
    uniformly for $c\in J$ and
    $\epsilon n\le s\le k$.
    Hence \eqref{eq:schur-c} gives $r=O(n^{-1})$.
    Moreover,
    \begin{equation}
        \frac{r_s}{\Gamma_s+r}=\frac{r_s}{\Gamma_s}+O(n^{-2}),
        \qquad
        \frac{t_s}{\Omega_s+r}=\frac{t_s}{\Omega_s}+O(n^{-2}),
    \end{equation}
    and therefore
    \begin{equation}\label{eq:r-theta-reduced}
        r=\frac{c^2s}{n^2}
        \left(\frac{r_s}{\Gamma_s}+\frac{t_s}{\Omega_s}\right)
        +O(n^{-3}).
    \end{equation}
    Since
    \begin{equation}
        \frac{r_s}{\Gamma_s}
        =\frac{1-(b+1)\theta_s}{b\theta_s}+O(n^{-1}),
        \qquad
        \frac{t_s}{\Omega_s}
        =\frac{b\theta_s}{1-\theta_s}+O(n^{-1}),
    \end{equation}
    and $s=\theta_sn$, \eqref{eq:r-theta-reduced} yields
    \begin{equation}
        r=\frac{c^2}{n}h_b(\theta_s)+O(n^{-2}).
    \end{equation}
    Together with $u_s=n-1-c+cs/n$, this proves \eqref{eq:theta-expansion}.
\end{proof}

Specializing these estimates to the two endpoint graphs gives
the expansions used later to locate the crossing point.

\begin{corollary}\label{cor:Lambda-1-expansion}
    Fix $c_0>b+1$. Then, uniformly for $c\in[c_0,2b+1]$,
    \begin{equation}\label{eq:end-l1}
        \Lambda_1\left(1-\frac{c}{n}\right)
        =n-1-(b+1)+\frac{c(b+1)}{(c-b-1)n}+O(n^{-2}).
    \end{equation}
\end{corollary}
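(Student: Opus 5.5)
This is the case $s=1$ of part~(1) of \Cref{thm:schur-asymptotics}, applied with the compact interval $I=[c_0,2b+1]$. Since $c_0>b+1=b\cdot 1+1$, the interval $I$ is a compact subset of $(bs+1,+\infty)$ for $s=1$, so the hypotheses hold. Part~(1) then gives, uniformly for $c\in I$,
\begin{equation*}
    \Lambda_1\left(1-\frac{c}{n}\right)
    =n-b-2-\frac{c}{n}+\frac{c^2}{(c-b-1)n}+O(n^{-2}).
\end{equation*}
The implied constant depends only on $b$ and $c_0$, because $s=1$ is fixed and $I$ is determined by $c_0$ and $b$.

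It remains to simplify the $1/n$ coefficient. Combining the two terms,
\begin{equation*}
    -\frac{c}{n}+\frac{c^2}{(c-b-1)n}=\frac{-c(c-b-1)+c^2}{(c-b-1)n}=\frac{c(b+1)}{(c-b-1)n}.
\end{equation*}
Writing $n-b-2=n-1-(b+1)$, we obtain exactly \eqref{eq:end-l1}.

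The only point needing care is the uniformity in $c$, and it is already covered by part~(1) of \Cref{thm:schur-asymptotics}. There, uniformity comes from the lower bound $\Delta_1=c-b-1\ge c_0-b-1>0$ on $I$ together with the boundedness of $c$, and both hold on $[c_0,2b+1]$. So no further work is needed beyond this algebraic simplification.
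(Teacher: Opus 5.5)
Your proposal is correct and is the same as the paper's proof: apply the fixed-$s$ part of \Cref{thm:schur-asymptotics} with $s=1$ and $I=[c_0,2b+1]\subset(b+1,+\infty)$. Your explicit simplification $-\frac{c}{n}+\frac{c^2}{(c-b-1)n}=\frac{c(b+1)}{(c-b-1)n}$, together with $n-b-2=n-1-(b+1)$, is just the algebra the paper leaves to the reader.
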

\begin{proof}
    Apply the fixed-$s$ part of \Cref{thm:schur-asymptotics} with $s=1$.
\end{proof}

\begin{corollary}\label{cor:Lambda-k-expansion}
    Fix $c_0>b+1$. Then, uniformly for $c\in[c_0,2b+1]$,
    \begin{equation}\label{eq:end-lk}
        \Lambda_k\left(1-\frac{c}{n}\right)
        =n-1-\frac{b}{b+1}c
        +\frac{c(c-\ell-1)}{(b+1)n}+O(n^{-2}),
    \end{equation}
    where $\ell=n-(b+1)k-1$.
\end{corollary}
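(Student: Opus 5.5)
The plan is to obtain \eqref{eq:end-lk} from the second estimate of \Cref{thm:schur-asymptotics}, applied with $s=k$, and then to replace $\theta_k=k/n$ by its limit $1/(b+1)$ at the cost of an $O(n^{-2})$ error.

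First I fix $\epsilon=\frac{1}{2(b+1)}\in(0,\frac1{b+1})$ and take $J=[c_0,2b+1]$, which is a bounded interval in $(0,+\infty)$. From $\ell=n-(b+1)k-1$ and $1\le\ell\le b$ (\Cref{def:notation}) we get the exact identity
\begin{equation*}
    \theta_k\coloneqq\frac{k}{n}=\frac{n-1-\ell}{(b+1)n}=\frac{1}{b+1}-\frac{\ell+1}{(b+1)n}.
\end{equation*}
Hence $k\ge\epsilon n$ for all large even $n$. The second part of \Cref{thm:schur-asymptotics} then applies uniformly for $c\in J$ and gives
\begin{equation*}
    \Lambda_k\left(1-\frac{c}{n}\right)=n-1-c+\frac{ck}{n}+\frac{c^2}{n}h_b(\theta_k)+O(n^{-2}).
\end{equation*}

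Next I evaluate the two $k$-dependent terms. For the linear term,
\begin{equation*}
    \frac{ck}{n}=\frac{c}{b+1}-\frac{c(\ell+1)}{(b+1)n}.
\end{equation*}
For $h_b(\theta_k)$ I split into its two summands. The first summand is exactly $\frac{1-(b+1)\theta_k}{b}=\frac{\ell+1}{bn}$. Multiplied by $c^2/n$ it becomes $O(n^{-2})$ uniformly on $J$, because $\ell\le b$ and $c$ is bounded. The second summand $\frac{b\theta^2}{1-\theta}$ is smooth near $\theta=1/(b+1)$, and its value there is
\begin{equation*}
    \frac{b/(b+1)^2}{b/(b+1)}=\frac{1}{b+1}.
\end{equation*}
Since $\theta_k-\frac1{b+1}=O(n^{-1})$, a mean-value bound shows that this summand equals $\frac{1}{b+1}+O(n^{-1})$. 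Altogether, $\frac{c^2}{n}h_b(\theta_k)=\frac{c^2}{(b+1)n}+O(n^{-2})$.

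Collecting terms gives
\begin{equation*}
    \Lambda_k\left(1-\frac{c}{n}\right)=n-1-\frac{b}{b+1}c+\frac{c^2-c(\ell+1)}{(b+1)n}+O(n^{-2}),
\end{equation*}
which is \eqref{eq:end-lk}.

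There is no real obstacle here. The only points needing care are, first, that $k\ge\epsilon n$ for large $n$, and second, that every implicit constant stays uniform in $c\in[c_0,2b+1]$ and in the residue $\ell\in\{1,3,\dots,b\}$. Uniformity in $\ell$ is automatic because $\ell$ takes only finitely many values. The hypothesis $c_0>b+1$ is not actually needed for this corollary; it only keeps the statement parallel to \Cref{cor:Lambda-1-expansion}.
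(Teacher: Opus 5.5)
Your proposal is correct and follows the paper's proof. Like the paper, you apply the second estimate of \Cref{thm:schur-asymptotics} with $s=k$, use the exact identity $\theta_k=\frac{1}{b+1}-\frac{\ell+1}{(b+1)n}$, and replace $h_b(\theta_k)$ by $h_b\bigl(\frac{1}{b+1}\bigr)=\frac{1}{b+1}$ at an $O(n^{-2})$ cost once it is multiplied by $c^2/n$. The paper leaves implicit several points that you spell out: the choice of $\epsilon$ and $J$, and the bookkeeping of the $ck/n$ term. Your remark that the hypothesis $c_0>b+1$ is not needed for this corollary is also accurate.
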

\begin{proof}
    Put
    \begin{equation}\label{eq:theta-k}
        \theta_k\coloneqq\frac{k}{n}
        =\frac{1}{b+1}-\frac{\ell+1}{(b+1)n}.
    \end{equation}
    Applying the second part of \Cref{thm:schur-asymptotics}
    with $s=k$ gives
    \begin{equation}
        \Lambda_k\left(1-\frac{c}{n}\right)
        =n-1-c+\frac{ck}{n}
        +\frac{c^2}{n}h_b(\theta_k)+O(n^{-2}).
    \end{equation}
    Since $h_b(1/(b+1))=1/(b+1)$ and
    $\theta_k=1/(b+1)+O(n^{-1})$, the result follows.
\end{proof}

\section{\texorpdfstring{It is enough to compare the spectral radii of $G_1$ and $G_k$}{It is enough to compare the spectral radii of G\_1 and G\_k}}\label{sec:compareGs}
In this section, for fixed $n$, $b$, and $\alpha$,
we compare the spectral radii of graphs in $\{G_s\}_{s=1}^k$.

The characteristic polynomial of $\widetilde{B}_s$ is
\begin{equation}
    \chi_s(x)\coloneqq\det(xI-\widetilde{B}_s)
    =(x-u_s)(x-v_s)(x-w_s)
    -\beta^2s\left[r_s(x-w_s)+t_s(x-v_s)\right].
\end{equation}
On the interval $x>\max\{v_s,w_s\}$, we have
\begin{equation}
    \frac{\chi_s(x)}{(x-v_s)(x-w_s)}
    =x-u_s-\beta^2s\left(\frac{r_s}{x-v_s}
    +\frac{t_s}{x-w_s}\right).
\end{equation}
The derivative of the right-hand side with respect to $x$ is
\begin{equation}
    1+\beta^2s\left(\frac{r_s}{(x-v_s)^2}
    +\frac{t_s}{(x-w_s)^2}\right)>0.
\end{equation}
Since $\Ls>\max\{v_s,w_s\}$ and $\chi_s(\Ls)=0$ by \Cref{lem:schur},
this quotient is strictly increasing and reaches $0$ at $x=\Ls$.
Therefore, for every $x>\max\{v_s,w_s\}$,
\begin{equation}\label{eq:chi-sign-criterion}
    \chi_s(x)\ge0\quad\Longleftrightarrow\quad x\ge\Ls,
    \qquad
    \chi_s(x)>0\quad\Longleftrightarrow\quad x>\Ls .
\end{equation}

\begin{lemma}\label{lem:chi-concave}
    Let $\alpha\in(\frac12,1)$,
    let $n$ be even, and let $b\ge 3$ be odd. Set
    $k=\left\lfloor\frac{n-2}{b+1}\right\rfloor$.
    Then, for fixed $x\ge u_k$,
    the function $s\mapsto\chi_s(x)$
    is strictly concave on $[1,k]$.
\end{lemma}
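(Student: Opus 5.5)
The plan is to treat $\chi_s(x)$, for fixed $x$, as an explicit polynomial in $s$. All of $u_s,v_s,w_s,r_s,t_s$ are affine in $s$:
\begin{equation*}
u_s=(n-1-\beta n)+\beta s,\quad v_s=n-2-(b+\beta)s,\quad w_s=\alpha s,\quad r_s=n-1-(b+1)s,\quad t_s=bs+1.
\end{equation*}
Put $a\coloneqq x-(n-1-\beta n)$ and $p\coloneqq x-n+2$. Then
\begin{equation*}
\chi_s(x)=(a-\beta s)\bigl(p+(b+\beta)s\bigr)(x-\alpha s)-\beta^2 s\bigl[r_s(x-\alpha s)+t_s\bigl(p+(b+\beta)s\bigr)\bigr]
\end{equation*}
is a cubic $C_0+C_1s+C_2s^2+C_3s^3$ in $s$. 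Hence $\partial_s^2\chi_s(x)=2C_2+6C_3s$ is affine in $s$. Strict concavity on $[1,k]$ is therefore equivalent to $2C_2+6C_3s<0$ at both endpoints $s=1$ and $s=k$.

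\textbf{Coefficients.} Expanding, I expect the leading coefficient to be
\begin{equation*}
C_3=b\beta\bigl(\alpha^2-\beta(b+\beta)\bigr).
\end{equation*}
Its sign is not fixed for $\alpha>\frac12$: it is positive when $\beta$ is small. So the $s^3$ term can work against concavity, and we cannot simply drop it. For $C_2$, note that $a$, $p$ and $x$ all have coefficient $1$ in $x$. Collecting the $x$-dependence of $C_2$ then gives
\begin{equation*}
-\beta(b+\beta)-\alpha(b+\beta)+\alpha\beta+\beta^2(b+1)-b\beta^2=-(b+\beta)+\beta(\alpha+\beta)=-b,
\end{equation*}
using $\alpha+\beta=1$. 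So $C_2$ is strictly decreasing in $x$, while $C_3$ does not depend on $x$. Consequently it suffices to verify $2C_2+6C_3s<0$ at the smallest admissible value $x=u_k$, i.e.\ at $a=\beta k$ and $p=u_k-n+2$.

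\textbf{Endpoint checks.} At $x=u_k$, everything becomes an explicit function of $n,b,\beta$ and $k$, with $k=(n-1-\ell)/(b+1)$ and $1\le\ell\le b$.
\begin{itemize}
\item For $s=1$: the term $6C_3$ is $O(1)$, whereas $2C_2$ is of order $-n$ times a positive combination. The leading part is roughly $2\beta n(\alpha(1+\beta)-b)-2\alpha(b+\beta)\beta k$, which is negative since $\alpha(1+\beta)<2<b$. One should still check that the lower-order terms cannot flip the sign for small $n$. I will use $k\ge1$, i.e.\ $n\ge b+3$, to control them.
\item For $s=k$: this is the main obstacle. One must show that
\begin{equation*}
2C_2\big|_{x=u_k}+6b\beta\bigl(\alpha^2-\beta(b+\beta)\bigr)k<0
\end{equation*}
for all $\alpha\in(\frac12,1)$ and all even $n$ with $k\ge1$. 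Both pieces are of order $\beta n$, so the comparison is genuinely at leading order. My first attempt is to divide by $\beta$ and substitute $n=(b+1)k+\ell+1$. This turns the left side into a polynomial in $k$, $\ell$, $\alpha$ (with $\beta=1-\alpha$) that is linear in $k$ up to lower-order terms. I would then show that the coefficient of $k$ is negative on $\alpha\in(\frac12,1)$ for every odd $b\ge3$, presumably as a quadratic in $\alpha$ checked at $\alpha=\frac12$ and $\alpha=1$ or via its vertex. Separately, I would show that the remaining constant term (in $\ell$, $b$, $\alpha$) is nonpositive, or is dominated once $k\ge1$.
\end{itemize}
If the raw leading-order comparison fails for some range of $\alpha$, I would exploit that $a=\beta k$ enters $C_2$ with the negative factor $-\alpha(b+\beta)$, and regroup these contributions with the positive $\alpha^2$ part of $C_3$. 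Here the hypothesis $\alpha>\frac12$ should be what makes the final sign work, via factors like $(2\alpha-1)$ or $\alpha+\beta=1$.
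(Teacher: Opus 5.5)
Your proposal is correct and is essentially the paper's proof: the paper also writes $\partial_s^2\chi_s(x)=-2\bigl(b(x-u_k)+\beta H_s\bigr)$ with $H_s$ affine in $s$, where your $x$-coefficient $-b$ of $C_2$ and your $C_3=b\beta\bigl(1-(b+2)\beta\bigr)$ are exactly what produce this form, and it then shows $H_s>0$ on $[1,k]$ by splitting at $\beta=1/(b+2)$ according to the sign of the slope, so that only one endpoint needs checking in each case. Your plan of checking both endpoints for all $\beta\in(0,\frac12)$ also closes: substituting $n=(b+1)k+\ell+1$ gives $H_k=b(b-1)k+b(b+4)\beta k+b\ell(1-2\beta)+(b+2)\beta-1>0$, and $H_1$ is affine in $\beta$ and positive at $\beta=0$ and $\beta=\frac12$; your heuristic leading term at $s=1$ is miscomputed (it should be $-2b\beta n(1-2\beta)-2b\beta k$), although its sign is right.
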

\begin{proof}
    Write
    \begin{equation}
        n=(b+1)k+\ell+1,\qquad 1\le \ell\le b.
    \end{equation}
    Treating $s$ as a real variable, a direct calculation gives
    \begin{equation}\label{eq:chi-second-derivative}
        \frac{\partial^2 \chi_s(x)}{\partial s^2}
        =-2\left(b(x-u_k)+\beta H_s\right),
    \end{equation}
    where
    \begin{equation}\label{eq:Hs-definition}
        H_s\coloneqq 3b\left((b+2)\beta-1\right)s
        +b(n+k-1)-2b\beta n+3b\beta+2\beta-1 .
    \end{equation}
    Since $x\ge u_k$ and $\beta>0$, it remains to prove that
    $H_s>0$ for $1\le s\le k$.

    The coefficient of $s$ in $H_s$ is
    $3b((b+2)\beta-1)$. If $0<\beta\le 1/(b+2)$, then
    $H_s\ge H_k$. Moreover $H_k$ is affine in $\beta$, and
    \begin{equation}
        H_k\big|_{\beta=0}
        =b\left((b-1)k+\ell\right)-1>0,
    \end{equation}
    while
    \begin{equation}
        H_k\big|_{\beta=\frac{1}{b+2}}
        =\frac{b\left(b^2k+2bk+b\ell+2k\right)}{b+2}>0.
    \end{equation}
    Thus $H_s>0$ in this case.

    If $1/(b+2)\le\beta<1/2$, then $H_s\ge H_1$. Again
    $H_1$ is affine in $\beta$, and
    \begin{equation}
        H_1\big|_{\beta=\frac{1}{b+2}}
        =\frac{b\left(b^2k+2bk+b\ell+2k\right)}{b+2}>0,
    \end{equation}
    while
    \begin{equation}
        H_1\big|_{\beta=\frac12}
        =\frac{b(3b+2k+1)}{2}>0.
    \end{equation}
    Hence $H_s>0$ also in this case. Therefore
    $\partial^2\chi_s(x)/\partial s^2<0$ on $[1,k]$, so
    $s\mapsto\chi_s(x)$ is strictly concave.
\end{proof}

The next theorem shows that the spectral radius of $G_s$
is dominated by that of one of the two endpoints, $G_1$ and $G_k$.

\begin{theorem}\label{thm:endpoint-reduction}
    Let $n$ be even and let $b\ge 3$ be odd. Set
    $k=\left\lfloor\frac{n-2}{b+1}\right\rfloor$.
    Assume $k\ge 1$. Then for every $\alpha\in(\frac12,1)$,
    \begin{equation}
        \max_{1\le s\le k}\ra(G_s)=\max\{\ra(G_1),\ra(G_k)\}.
    \end{equation}
\end{theorem}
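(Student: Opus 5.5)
The plan is to combine the sign criterion \eqref{eq:chi-sign-criterion} with the concavity in \Cref{lem:chi-concave}. Set
\[
x^\ast\coloneqq\max\{\Lambda_1,\Lambda_k\}.
\]
It suffices to show that $\chi_s(x^\ast)\ge 0$ for every $1\le s\le k$. Once $x^\ast>\max\{v_s,w_s\}$ is known, \eqref{eq:chi-sign-criterion} turns this into $\Lambda_s\le x^\ast$, which gives the theorem. If $k=1$ there is nothing to prove, so assume $k\ge2$.

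At the endpoints, $\chi_1(x^\ast)\ge0$ because $x^\ast\ge\Lambda_1>\max\{v_1,w_1\}$, and $\chi_k(x^\ast)\ge 0$ for the same reason with $\Lambda_k$. A concave function on $[1,k]$ that is nonnegative at both endpoints is nonnegative on the whole interval. So if \Cref{lem:chi-concave} applies at $x=x^\ast$, we get $\chi_s(x^\ast)\ge0$ for all real $s\in[1,k]$, and in particular for all integers $s$. Two side conditions remain:
\begin{enumerate}
\item[(a)] the hypothesis $x^\ast\ge u_k$ of \Cref{lem:chi-concave};
\item[(b)] $x^\ast>\max\{v_s,w_s\}$ for every intermediate $s$, so that \eqref{eq:chi-sign-criterion} can be used.
\end{enumerate}

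Condition (a) is immediate: by Perron--Frobenius (as in the proof of \Cref{lem:schur}) we have $\Lambda_k>u_k$, hence $x^\ast\ge\Lambda_k>u_k$.

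Condition (b) is where a little work is needed. The quantities $v_s=n-bs-2-\beta s$ and $w_s=(1-\beta)s$ are affine in $s$, with $v_s$ decreasing and $w_s$ increasing. Hence
\[
\max\{v_s,w_s\}\le\max\{v_1,w_k\}\quad\text{for all } s\in[1,k].
\]
We have $v_1<\Lambda_1\le x^\ast$. For the other term, $w_k=(1-\beta)k\le u_k$ because
\[
u_k-w_k=(n-1-k)-\beta(n-2k)\ge (n-1-k)-(n-2k)=k-1\ge0
\]
(using $\beta<1$ and $n-2k>0$). Therefore $w_k\le u_k<\Lambda_k\le x^\ast$. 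Alternatively, the bound $u_s\le u_k$ gives $\max\{v_s,w_s\}<\Lambda_s$ directly, but that is circular; the bound via the endpoints is the clean route. Since $u_s$ is increasing in $s$, we could also compare with $u_s$, but the endpoint argument above suffices.

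The main obstacle, namely concavity in $s$ for $\alpha>1/2$, is already handled by \Cref{lem:chi-concave}, so the proof is short. The one care point is that the lemma is stated for real $s\in[1,k]$ with $x$ fixed independently of $s$. Choosing $x=x^\ast$, which does not depend on $s$, is exactly what makes the concavity argument legitimate. Finally, \Cref{lem:quotient-spectrum} and \Cref{lem:equitable-Aalpha} identify $\Lambda_s$ with $\rho_\alpha(G_s)$, which transfers the inequality $\Lambda_s\le x^\ast$ to the graphs.
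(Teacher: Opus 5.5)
Your proof is correct and follows essentially the same route as the paper. You fix $x^\ast=\max\{\Lambda_1,\Lambda_k\}$, verify $x^\ast>\max\{v_s,w_s\}$ from the monotonicity of $v_s,w_s$ and $w_k\le u_k<\Lambda_k$, and then combine the concavity of $s\mapsto\chi_s(x^\ast)$ from \Cref{lem:chi-concave} with the sign criterion \eqref{eq:chi-sign-criterion}. The only differences are cosmetic (your bound $u_k-w_k\ge k-1$ in place of the paper's $n/2-1$, and the non-strict conclusion $\Lambda_s\le x^\ast$), and neither affects the result.
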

\begin{proof}
    If $k\le 2$, the assertion is immediate. Hence assume $k\ge 3$.
    Set
    \begin{equation}
        \Lambda_\ast\coloneqq\max\{\Lambda_1,\Lambda_k\}.
    \end{equation}
    It suffices to prove that
    \begin{equation}
        \Lambda_\ast>\Ls\qquad (2\le s\le k-1).
    \end{equation}

    We first check that $\Lambda_\ast>\max\{v_s,w_s\}$ for every $s$.
    By the Perron--Frobenius argument in \Cref{lem:schur},
    $\Lambda_i>\max\{u_i,v_i,w_i\}$ for $i=1,k$.
    Since $v_s=n-bs-2-\beta s$ is decreasing in $s$, we have
    \begin{equation}
        v_s\le v_1<\Lambda_1\le \Lambda_\ast
        \qquad (1\le s\le k).
    \end{equation}
    Also, $w_s=\alpha s$ is increasing in $s$, and
    \begin{equation}
        u_k-w_k
        =\alpha n-1-(2\alpha-1)k
        \ge \alpha n-1-(2\alpha-1)\frac n2
        =\frac n2-1>0.
    \end{equation}
    Hence
    \begin{equation}
        w_s\le w_k<u_k<\Lambda_k\le\Lambda_\ast
        \qquad (1\le s\le k).
    \end{equation}
    Thus
    \begin{equation}\label{eq:xstar-domain}
        \Lambda_\ast>\max\{v_s,w_s\}
        \qquad (1\le s\le k).
    \end{equation}

    Moreover, by \eqref{eq:chi-sign-criterion}
    and the definition of $\Lambda_\ast$,
    \begin{equation}
        \chi_1(\Lambda_\ast)\ge0,
        \qquad
        \chi_k(\Lambda_\ast)\ge0.
    \end{equation}
    Since $\Lambda_\ast>u_k$, \Cref{lem:chi-concave} implies that
    $s\mapsto\chi_s(\Lambda_\ast)$ is strictly concave on $[1,k]$. Therefore,
    for every $2\le s\le k-1$,
    \begin{equation}
        \chi_s(\Lambda_\ast)>
        \frac{k-s}{k-1}\chi_1(\Lambda_\ast)
        +\frac{s-1}{k-1}\chi_k(\Lambda_\ast)
        \ge0.
    \end{equation}
    Applying \eqref{eq:chi-sign-criterion} once more gives
    \begin{equation}
        \Lambda_\ast>\Ls
        \qquad (2\le s\le k-1).
    \end{equation}
    This completes the proof.
\end{proof}

\section{\texorpdfstring{The comparison of the spectral radii of $G_1$ and $G_k$}{The comparison of the spectral radii of G\_1 and G\_k}}\label{sec:endpoint}
We now compare the spectral radius
of the two endpoint graphs $G_1$ and $G_k$.
By the last section, this comparison suffices because
the graph with the largest spectral radius in the family
must be one of these two endpoint graphs.
Throughout this section, write
\begin{equation}
    n=(b+1)k+\ell+1\quad(1\le\ell=r_k\le b),
    \qquad c=n(1-\alpha).
\end{equation}
Thus $0<c<n/2$ for $\alpha\in(\frac12,1)$. Put
\begin{equation}
    D_n(c)\coloneqq
    \Lambda_1\left(1-\frac cn\right)-\Lambda_k\left(1-\frac cn\right).
\end{equation}
We shall prove that, for each fixed $n$ under consideration,
$D_n(c)$ has exactly one zero.
Specifically, we will prove that
$D_n(c)$ is negative on a left interval,
strictly increasing through a unique zero on a middle interval,
and positive on a right interval.
\Cref{fig:Dn-sign-pattern} illustrates this behavior.

\begin{figure}[htb]
    \centering
    \begin{tikzpicture}[
        x=0.5cm,
        y=0.5cm,
        >=Stealth,
        every node/.style={font=\small}
    ]
        \draw[->] (0,0) -- (13,0) node[right] {$c$};
        \draw[->] (0,-1.2) -- (0,6.2) node[above] {$D_n(c)$};
        \draw[densely dashed] (4,-1) -- (4,5)
            node[above] {$c=b+1$};
        \draw[densely dashed] (7,-1) -- (7,6)
            node[above] {$c=2b+1$};
        \draw[thick,blue]
            plot[smooth] coordinates {
                (0.400,-0.090)
                (1.200,-0.267)
                (2.000,-0.436)
                (3.000,-0.596)
                (4.000,-0.530)
                (4.700,-0.235)
                (5.115,0.000)
                (5.300,0.112)
                (6.000,0.558)
                (7.000,1.219)
                (10.000,3.202)
                (11.000,3.847)
                (12.000,4.482)
            };
        \fill[blue] (5.115,0) circle (2pt);
        \node[below] at (5.115,0) {$c_\ast$};
        \node[below left] at (3,-0.596) {$D_n<0$};
        \node[above left] at (11,3.847) {$D_n>0$};
    \end{tikzpicture}
    \caption{The sign pattern of $D_n(c)$ in the endpoint comparison.
    The plotted points are computed from the quotient matrices
    for $b=3$ and $n=N_3=58$.}
    \label{fig:Dn-sign-pattern}
\end{figure}

For the rest of the section, set
\begin{equation}
    K_b\coloneqq \max\{2b+3,14\},
    \qquad
    N_b\coloneqq (b+1)K_b+2.
\end{equation}
Equivalently,
\begin{equation}
    K_b=\begin{cases}
        14, & b\in\{3,5\}, \\
        2b+3, & b\ge 7,\,b\text{ odd},
    \end{cases}
    \qquad
    N_b=\begin{cases}
        58, & b=3, \\
        86, & b=5, \\
        2b^2+5b+5, & b\ge7,\,b\text{ odd}.
    \end{cases}
\end{equation}
Note that $n\ge N_b$ is equivalent to $k\ge K_b$.

\begin{lemma}\label{lem:endpoint-left}
    Let $b\ge3$ be odd and let $n\ge N_b$ be even.
    Then
    \begin{equation}
        D_n(c)<0\qquad (0<c\le b+1).
    \end{equation}
\end{lemma}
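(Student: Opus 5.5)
To prove $D_n(c)<0$ for $0<c\le b+1$, I will show $\Lambda_1<\Lambda_k$ via the sign criterion \eqref{eq:chi-sign-criterion}. It suffices to find a point $x$ with
\[
x>\max\{v_1,w_1\},\qquad x\ge \Lambda_1\ \bigl(\text{i.e. } \chi_1(x)\ge0\bigr),\qquad \chi_k(x)<0 ,
\]
because then $\Lambda_1\le x<\Lambda_k$. The natural choice is $x=\Lambda_1^{+}$ from \Cref{lem:Lambda-upper-bound}, or an explicit simpler upper bound for $\Lambda_1$. Alternatively I take $x=u_k$, which is a lower bound for $\Lambda_k$. Then I need $\Lambda_1<u_k$, i.e.\ $\chi_1(u_k)>0$ with $u_k>\max\{v_1,w_1\}$.

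Here $\beta=c/n$ and $u_k=n-1-\beta(n-k)=n-1-c+ck/n$. Also $k/n=\frac1{b+1}-\frac{\ell+1}{(b+1)n}$, so
\[
u_k=n-1-\frac{bc}{b+1}-\frac{c(\ell+1)}{(b+1)n}.
\]
For $G_1$ we have $u_1=n-1-c+c/n$, $v_1=n-b-2-c/n$ and $w_1=1-c/n$. Thus
\[
u_k-v_1=b+1-\frac{bc}{b+1}+O(c/n)\ge b+1-b+O(1/n)>0
\]
for $c\le b+1$, with an explicit margin about $1-\frac{(b+2)}{n}\cdot(\text{const})$. Clearly $u_k-w_1$ is of order $n$. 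So $x=u_k$ lies in the admissible domain.

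The main step is checking
\[
u_k-u_1-\beta^2\Bigl(\frac{r_1}{u_k-v_1}+\frac{t_1}{u_k-w_1}\Bigr)>0 .
\]
Now $u_k-u_1=\frac{c}{b+1}-\frac{c(\ell+2)+\dots}{(b+1)n}$, which is positive and of order $c$. The subtracted term is about $\frac{c^2}{n}\cdot\frac{1}{b+1-bc/(b+1)}+O(c^2/n^2)$. After dividing by $c$, the inequality becomes roughly
\[
\frac1{b+1}>\frac{c}{n\,(b+1-bc/(b+1))}+O(1/n).
\]
The denominator is at least about $1$ on $c\le b+1$, and $c/n\le (b+1)/n$. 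So for $n\ge N_b$ (equivalently $k\ge K_b$) this holds with room.

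I expect the obstacle to be making this rigorous with explicit constants rather than $O(\cdot)$. I would clear denominators and write the condition as a polynomial inequality in $c\in(0,b+1]$ with coefficients depending on $k,\ell,b$, using $n=(b+1)k+\ell+1$. I would bound the worst case at $c=b+1$, where $u_k-v_1$ is smallest, about $1-(b+2)(\ell+1)/((b+1)n)$-type. Then I would verify positivity using $k\ge K_b$ and $1\le\ell\le b$, handling $b\in\{3,5\}$ with $k\ge14$ and $b\ge7$ with $k\ge 2b+3$. If the margin at $c=b+1$ proves too thin, I would instead use a sharper lower bound for $\Lambda_k$, namely $f_k$ evaluated at an upper bound, in place of $u_k$.
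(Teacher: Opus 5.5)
Your proposal is correct, and it takes a cleaner route than the paper. The paper also ends with the sandwich $\Lambda_1<u_k<\Lambda_k$, but it anchors its upper bound for $\Lambda_1$ at $u_1$, and this forces a case split:
\begin{itemize}
\item For $0<c\le b+\frac12$ it uses $u_1-v_1\ge\frac12$ to get $\Lambda_1\le u_1+2c^2/n$, then compares with $u_k-u_1=c(k-1)/n$. This is where $k\ge 2b+3$ enters.
\item For $b+\frac12\le c\le b+1$ the gap $u_1-v_1=b+1-c+2c/n$ degenerates. The paper therefore switches to the test point $u_1+\frac34$, verifies $\chi_1(u_1+\frac34)>0$ by a separate polynomial check, and shows $u_k-u_1>\frac34$.
\end{itemize}
Your test point $x=u_k$ avoids the split because $u_k$ stays uniformly away from $v_1$. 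Indeed $u_k-v_1=b+1-c+c(k+1)/n$, and $n\le(b+1)(k+1)$ gives $u_k-v_1\ge\frac{(b+1)^2-bc}{b+1}\ge1$ and $c/(u_k-v_1)\le b+1$ on $(0,b+1]$. So the margin at $c=b+1$ is not ``about $1-\dots$'' but at least $1$, and your fallback is unnecessary. Together with $u_k-w_1\ge n-b-3$, this gives
\[
\frac{\chi_1(u_k)}{(u_k-v_1)(u_k-w_1)}
\ge\frac{c}{n}\Bigl(k-1-(b+1)-\frac{(b+1)^2}{n(n-b-3)}\Bigr)>0
\]
once $k\ge b+3$. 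Since $u_k>\max\{v_1,w_1\}$, the sign criterion then yields $\Lambda_1<u_k<\Lambda_k$ in one uniform computation. This needs less than $k\ge K_b$ for this lemma, although the full threshold is still used in the other endpoint lemmas.
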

\begin{proof}
    If $0<c\le b+\frac12$,
    a direct calculation gives
    $u_1-v_1\ge\frac12$ and $u_1-w_1\ge\frac n2$.
    Writing $\Lambda_1=u_1+r$ in the Schur equation then gives
    \begin{equation}\label{eq:step1-1-1}
        r=
        \frac{c^2}{n^2}
        \left(
            \frac{r_1}{\Lambda_1-v_1}
            +\frac{b+1}{\Lambda_1-w_1}
        \right)
        \le\frac{2c^2}{n}.
    \end{equation}
    Here the last inequality follows from
    \begin{equation}
        \frac{r_1}{\Lambda_1-v_1}+\frac{b+1}{\Lambda_1-w_1}
        \le
        \frac{n-b-2}{u_1-v_1}+\frac{b+1}{u_1-w_1}
        \le 2(n-b-2)+2(b+1)
        =2(n-1)<2n.
    \end{equation}
    Since $\Lambda_k>u_k$ and
    \begin{equation}\label{eq:step1-1-2}
        u_k-u_1=\frac{c(k-1)}{n},
    \end{equation}
    while $k\ge K_b$, we have
    \begin{equation}\label{eq:step1-1-3}
        \frac{k-1}{n}>\frac{2(b+\frac12)}{n}.
    \end{equation}

    Combining these facts, we have
    \begin{equation}
        u_k-u_1
        \overset{\eqref{eq:step1-1-2}}{=}
        \frac{c(k-1)}{n}
        \overset{\eqref{eq:step1-1-3}}{>}
        \frac{2c\left(b+\frac12\right)}{n}
        \ge
        \frac{2c^2}{n}
        \overset{\eqref{eq:step1-1-1}}{\ge}
        r,
    \end{equation}
    Hence
    \begin{equation}
        \Lambda_1=u_1+r<u_1+(u_k-u_1)=u_k<\Lambda_k,
    \end{equation}
    therefore $D_n(c)<0$ for $0<c\le b+\frac12$.

    It remains to consider $b+\frac12\le c\le b+1$.
    Let $a=c\sqrt{r_1}/n$, $d=c\sqrt{b+1}/n$, and put $x=u_1+3/4$.

    Since $r_1=n-(b+1)-1=n-b-2<n$, we have
    \begin{equation}\label{eq:step1-2-1}
        a^2=\frac{c^2r_1}{n^2}
        \le
        \frac{(b+1)^2}{n}.
    \end{equation}
    We also have
    \begin{equation}
        d^2=\frac{c^2(b+1)}{n^2}
        \le
        \frac{(b+1)^3}{n^2}.
    \end{equation}
    Moreover, since $x-w_1=u_1-w_1+3/4$ and
    \begin{equation}
        u_1-w_1=n-2-c+\frac{2c}{n}\ge n-b-3\ge\frac n2,
    \end{equation}
    we obtain
    \begin{equation}\label{eq:step1-2-2}
        \frac{d^2}{x-w_1}
        \le
        \frac{(b+1)^3/n^2}{n/2}
        =
        \frac{2(b+1)^3}{n^3}.
    \end{equation}
    Finally, using $x-v_1,x-w_1>x-u_1=3/4$,
    it holds that
    \begin{equation}\label{eq:step1-2-3}
        \frac{\chi_1(x)}{(x-v_1)(x-w_1)}
        =x-u_1-\frac{a^2}{x-v_1}-\frac{d^2}{x-w_1}
        \ge\frac34-\frac{4(b+1)^2}{3n}
        -\frac{2(b+1)^3}{n^3}.
    \end{equation}
    
    We claim that \eqref{eq:step1-2-3} is positive
    for every $n\ge N_b$.
    Indeed, for $b=3\text{ (resp. }5\text{)}$,
    this is checked directly
    from $n\ge58\text{ (resp. }86\text{)}$.
    For $b\ge7$, the lower bound is at least
    \begin{equation}
        \frac34-\frac{4(b+1)^2}{3(2b^2+5b+5)}
        -\frac{2(b+1)^3}{(2b^2+5b+5)^3}>0,
    \end{equation}
    since, after multiplying both sides by
    $12(2b^2+5b+5)^3$, the numerator is
    \begin{equation}
        8b^6+92b^5+466b^4+1241b^3+1933b^2+1703b+701,
    \end{equation}
    whose coefficients are all positive.
    Therefore $x=u_1+3/4>\Lambda_1$.

    On the other hand,
    \begin{equation}
        u_k-u_1
        =\frac{c(k-1)}{n}
        \overset{n\le(b+1)(k+1)}{\ge}
        \frac{(b+\frac12)(K_b-1)}{(b+1)(K_b+1)}
        >\frac34.
    \end{equation}
    The last inequality is immediate for $b=3,5$,
    where $K_b=14$, and for $b\ge7$ it becomes
    \begin{equation}
        \frac{(b+\frac12)(2b+2)}{(b+1)(2b+4)}
        =\frac{2b+1}{2b+4}>\frac34.
    \end{equation}
    Thus $\Lambda_k>u_k>u_1+3/4=x>\Lambda_1$
    and therefore $D_n(c)<0$, completing the proof.
\end{proof}

\begin{lemma}\label{lem:endpoint-middle}
    Let $b\ge3$ be odd and let $n\ge N_b$ be even.
    Then $D_n$ is strictly increasing on $[b+1,2b+1]$.
\end{lemma}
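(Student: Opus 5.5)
We show $D_n'(c)>0$ on $[b+1,2b+1]$ by differentiating the two Perron roots implicitly in $c$, at fixed $n$. Write $\beta=c/n$, so $\alpha=1-c/n$. Then $\Lambda_s$ is the simple Perron root of $\widetilde B_s(\beta)$, and $F_s(x,\beta)=x-u_s-\beta^2 s\bigl(\tfrac{r_s}{x-v_s}+\tfrac{t_s}{x-w_s}\bigr)$ vanishes at $x=\Lambda_s$. Since $\partial_x F_s=1+\beta^2 s\bigl(\tfrac{r_s}{(x-v_s)^2}+\tfrac{t_s}{(x-w_s)^2}\bigr)>0$, implicit differentiation gives
\begin{equation*}
\frac{d\Lambda_s}{d\beta}=\frac{\partial_\beta u_s+2\beta s\,R_s+\beta^2 s\bigl(\tfrac{r_s\,\partial_\beta v_s}{(\Lambda_s-v_s)^2}+\tfrac{t_s\,\partial_\beta w_s}{(\Lambda_s-w_s)^2}\bigr)}{1+\beta^2 s\,Q_s},
\end{equation*}
where $R_s=\tfrac{r_s}{\Lambda_s-v_s}+\tfrac{t_s}{\Lambda_s-w_s}$ and $Q_s$ is the corresponding sum of squares. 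The partial derivatives are $\partial_\beta u_s=-(n-s)$, $\partial_\beta v_s=-s$ and $\partial_\beta w_s=-s$. Equivalently one can use the Hellmann--Feynman formula $d\Lambda_s/d\beta=y^{\tp}(\partial_\beta\widetilde B_s)y$ with $y$ the unit Perron vector; this form is transparent because $\partial_\beta\widetilde B_s$ is the quotient of $A-D$.

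\textbf{Endpoint $G_k$.} Here $u_k-\max\{v_k,w_k\}$ is of order $n$ (see the second part of \Cref{thm:schur-asymptotics}), so $\Lambda_k-u_k=O(1/n)$ and the Perron vector is concentrated on $V_1$ up to $O(1/n)$. Hence
\begin{equation*}
\frac{d\Lambda_k}{d\beta}=-(n-k)+O(1),
\end{equation*}
so $\tfrac{d}{dc}\Lambda_k\approx-\tfrac{b}{b+1}$. This is consistent with \Cref{cor:Lambda-k-expansion}.

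\textbf{Endpoint $G_1$.} For $c\in[b+1,2b+1]$ the diagonal entries $u_1$ and $v_1$ are close: $u_1-v_1=b+1-c+O(1/n)$. The Perron vector therefore mixes $V_1$ and $V_2$, and $d\Lambda_1/d\beta$ is a convex-type combination of $-(n-1)$ (weight on $V_1$) and $-1$ (weight on $V_2$). We need to show that this derivative exceeds $-(n-k)+O(1)$ by a definite margin. Since $n-k\approx \tfrac{b}{b+1}n$, it suffices to show that the Perron vector of $\widetilde B_1$ places weight at least roughly $\tfrac{1}{b+1}$, plus slack, on $V_2$; for large $c$ this weight tends to $1$, since $\Lambda_1\approx v_1$ when $c>b+1$, in line with \Cref{cor:Lambda-1-expansion}, where the $c$-dependence is only $O(1/n)$. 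Concretely, we will write the $V_2$ weight as $y_2^2=\beta^2 r_1/(\Lambda_1-v_1)^2\cdot y_1^2$ and bound $\Lambda_1-v_1$ from above. We already have $\Lambda_1-v_1\le \max\{u_1-v_1,0\}+O(\sqrt{b+1}/\sqrt n)$ from the quadratic relation in the proof of \Cref{thm:schur-asymptotics}, together with \Cref{lem:Lambda-upper-bound}. This gives $y_2^2/y_1^2\gtrsim \beta^2 n/(\text{small})^2$, which is large.

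\textbf{Main obstacle.} The difficulty is making this explicit and uniform near $c=b+1$, where $u_1=v_1$ and the two levels nearly cross. There the gap $\Lambda_1-v_1$ is of order $c/\sqrt n$, so $y_2^2/y_1^2$ is only of order $1$, roughly between $1$ and $(b+1)^2$ after normalization. First I would bound $\Lambda_1-v_1\le \tfrac12\bigl(|u_1-v_1|+\sqrt{(u_1-v_1)^2+4\beta^2 r_1}\bigr)+O(n^{-1})$ using the $2\times2$ block and monotonicity; this gives $y_2^2\ge y_1^2$ roughly. That bound forces $d\Lambda_1/d\beta\ge -\tfrac{n}{2}-O(1)>-(n-k)+O(1)$ as soon as $\tfrac{b}{b+1}>\tfrac12$, which holds for $b\ge3$ with margin $\tfrac{b-1}{2(b+1)}n$. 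All error terms would then be checked against this margin using $k\ge K_b$, mirroring the explicit bookkeeping in \Cref{lem:endpoint-left}. Dividing by $n$ (since $d\beta/dc=1/n$) gives $D_n'(c)>0$ on $[b+1,2b+1]$.
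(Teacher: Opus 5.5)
Your overall route is the paper's: a Hellmann--Feynman slope via the unit Perron vector, concentration of the $G_k$ Perron vector on $V_1$ (slope $\approx-\frac{b}{b+1}$ in $c$), and a lower bound on the $V_2$-mass of the $G_1$ Perron vector, obtained by bounding $\Lambda_1-v_1$ above against $a=\beta\sqrt{r_1}=c\sqrt{r_1}/n$.

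The gap is that your $G_1$ estimate is not uniform on $[b+1,2b+1]$. This matters because the lemma is claimed for every even $n\ge N_b$, not just asymptotically.

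\textbf{The near-crossing bound is too weak.} Your bound $\frac12\bigl(|u_1-v_1|+\sqrt{(u_1-v_1)^2+4a^2}\bigr)$ is at least $|u_1-v_1|\ge c-b-1-\frac{2c}{n}$. So it yields $y_2\gtrsim y_1$ only while $c-(b+1)\lesssim a\approx c/\sqrt n$. For example, at $b=3$, $n=58$, $c=5$ it gives only $y_1^2\le 0.78$, hence only $\Lambda_1'(c)\ge-0.77$ in $c$-units. There $\Lambda_k'(5)\approx-0.71$, so nothing follows.

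\textbf{The large-$c$ regime is not explicit.} For larger $c$ you assert that $y_2^2/y_1^2$ is large. That rests on $\Lambda_1-v_1=O(1/n)$ from \Cref{thm:schur-asymptotics}(1), whose constant $\sim c^2/(c-b-1)$ blows up as $c\downarrow b+1$ and is not explicit. The bound you actually write, $\max\{u_1-v_1,0\}+O(\sqrt{b+1}/\sqrt n)$, gives only a ratio of order one with an unspecified constant, not a large one. So the two regimes do not patch together.

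Separately, $y_1^2\le\frac12+O(n^{-1/2})$ produces an $O(\sqrt n)$ error in $d\Lambda_1/d\beta$, not $O(1)$.

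\textbf{The fix.} Keep the sign instead of the absolute value. From $\Lambda_1-u_1=\frac{a^2}{\Lambda_1-v_1}+\frac{d^2}{\Lambda_1-w_1}$ you get
$\Lambda_1-v_1\le\max\{u_1-v_1,0\}+a+\frac{d^2}{\Lambda_1-w_1}$.
On the whole interval, $u_1-v_1=b+1-c+\frac{2c}{n}\le\frac{2c}{n}$ while $a\ge c/\sqrt{2n}$. Hence $y_2/y_1\ge1-O(n^{-1/2})$ uniformly in $c$.

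\textbf{How the paper does it.} The paper implements exactly this with room to spare.
\begin{itemize}
\item On the $G_1$ side, it uses the sign criterion for $\chi_1$ at $x=v_1+\sqrt{3/2}\,a$ to get $\Lambda_1\le v_1+\sqrt{3/2}\,a$. This needs only $\frac{c}{\sqrt{12n}}>\frac{2c}{n}+\frac{2(2b+1)^2(b+1)}{n^3}$. It gives $V_1$-weight at most $\frac35$ and $\Lambda_1'(c)>-\frac23$.
\item On the $G_k$ side, it makes your $O(1)$ explicit. It bounds the cross terms by $\frac{2ck(\ell+bk+1)}{n^2(bk-2b)}\le\frac1{12}$ using $k\ge K_b$, so $\Lambda_k'(c)\le-\frac{b}{b+1}+\frac1{12}\le-\frac23$.
\end{itemize}
With these two explicit inequalities in place of your asymptotic ones, your argument closes.
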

\begin{proof}
    Consider the quotient matrices $\widetilde{B}_s(c)$.
    For a unit Perron vector
    $z_s=(p_s,q_s,\zeta_s)^\tp$ of $\widetilde{B}_s(c)$,
    differentiating the matrix $\widetilde{B}_s(c)$ with respect to $c$ gives
    \begin{equation}\label{eq:step2-1-1}
        \Lambda_s'(c)
        =z_s^\tp\widetilde{B}_s'(c)z_s
        =\left(-1+\frac{s}{n}\right)p_s^2
        -\frac{s}{n}q_s^2
        -\frac{s}{n}\zeta_s^2
        +\frac{2\sqrt{s r_s}}{n}p_sq_s
        +\frac{2\sqrt{s t_s}}{n}p_s\zeta_s.
    \end{equation}
    We claim that
    \begin{equation}
        \Lambda_1'(c)>-\frac23,
        \qquad
        \Lambda_k'(c)<-\frac23
        \qquad (b+1\le c\le2b+1).
    \end{equation}

    First consider $s=1$. Let
    $a=c\sqrt{r_1}/n$, $d=c\sqrt{b+1}/n$, and
    $\gamma=\sqrt{3/2}$.  Put $x=v_1+\gamma a$.
    We claim that
    \begin{equation}
        x-u_1-\frac{a^2}{x-v_1}-\frac{d^2}{x-w_1}
        =v_1-u_1+\left(\gamma-\frac1\gamma\right)a
        -\frac{d^2}{x-w_1}>0
    \end{equation}
    for all $n\ge N_b$.
    
    From
    \begin{align}
        v_1-u_1&=c-b-1-\frac{2c}{n}\ge-\frac{2c}{n}, \\
        \left(\gamma-\frac1\gamma\right)a&=\frac1{\sqrt6}\cdot\frac{c\sqrt{r_1}}{n}
        \ge\frac1{\sqrt6}\cdot\frac{c\sqrt{n/2}}{n}=\frac{c}{\sqrt{12n}}, \\
        \frac{d^2}{x-w_1}&=\frac{c^2(b+1)/n^2}{x-w_1}
        \le\frac{(2b+1)^2(b+1)/n^2}{n/2}=\frac{2(2b+1)^2(b+1)}{n^3},
    \end{align}
    we have
    \begin{equation}
        v_1-u_1+\left(\gamma-\frac1\gamma\right)a
        -\frac{d^2}{x-w_1}
        \ge c\left(\frac1{\sqrt{12n}}-\frac2n\right)
        -\frac{2(2b+1)^2(b+1)}{n^3}.
    \end{equation}
    We prove that the right-hand side of the above is positive.
    Since $n\ge58$ and $c\ge b+1$, we have
    \begin{equation}
        \begin{aligned}
            c\left(\frac1{\sqrt{12n}}-\frac2n\right)
            -\frac{2(2b+1)^2(b+1)}{n^3}
            &\ge\frac{b+1}{40\sqrt n}
            -\frac{2(2b+1)^2(b+1)}{n^3} \\
            &=(b+1)\left(\frac1{40\sqrt n}-\frac{2(2b+1)^2}{n^3}\right)>0.
        \end{aligned}
    \end{equation}
    The last inequality follows from
    $n^{5/2}>80(2b+1)^2$,
    which is immediate from $n\ge N_b$.
    Hence $\Lambda_1\le v_1+\gamma a$.
    Therefore the Perron equation gives
    \begin{equation}\label{eq:step2-1-2}
        \begin{bmatrix}
            u_1 & a & d\\
            a & v_1 & 0\\
            d & 0 & w_1
        \end{bmatrix}
        \begin{bmatrix}
            p_1 \\ q_1 \\ \zeta_1
        \end{bmatrix}
        =\Lambda_1
        \begin{bmatrix}
            p_1 \\ q_1 \\ \zeta_1
        \end{bmatrix}
        \implies
        a p_1+v_1q_1=\Lambda_1 q_1
        \implies
        \frac{q_1}{p_1}=\frac{a}{\Lambda_1-v_1}\ge\frac1\gamma,
    \end{equation}
    and therefore $p_1^2\le\gamma^2/(1+\gamma^2)=3/5$
    since $p_1^2+q_1^2\le 1$.
    Dropping the positive off-diagonal terms
    of~\eqref{eq:step2-1-1} gives
    \begin{equation}
        \Lambda_1'(c)
        \ge\left(-1+\frac1n\right)p_1^2-\frac1n(1-p_1^2)
        =\left(-1+\frac2n\right)p_1^2-\frac1n
        >-\frac23.
    \end{equation}

    Now consider $s=k$.
    Let $A=c\sqrt{k\ell}/n$ and $E=c\sqrt{k(bk+1)}/n$.
    Since $\Lambda_k>u_k$, as in~\eqref{eq:step2-1-2},
    the Perron equations give
    \begin{equation}
        \frac{q_k}{p_k}\le \frac{A}{u_k-v_k},
        \qquad
        \frac{\zeta_k}{p_k}\le \frac{E}{u_k-w_k}.
    \end{equation}
    Moreover, on $b+1\le c\le2b+1$,
    \begin{equation}
        u_k-v_k\ge bk-2b,\qquad
        u_k-w_k\ge bk-2b .
    \end{equation}
    Substituting these into~\eqref{eq:step2-1-1}
    and dropping the negative terms yields
    \begin{equation}
        \Lambda_k'(c)
        \le
        -1+\frac{k}{n}
        +
        \frac{2c k(\ell+bk+1)}{n^2(bk-2b)} .
    \end{equation}
    Since $k\ge K_b$, $\ell\le b$, $c\le2b+1$, and
    $n\ge(b+1)k$, the last positive term
    \begin{equation}
        \frac{2(2b+1)(b+bk+1)}
        {(b+1)^2kb(k-2)}
        \le \frac1{12}.
    \end{equation}
    For $b=3,5$ this is verified by substituting $k\ge K_b=14$.
    For $b\ge7$, this is decreasing in $k$,
    so it is enough to check $k=2b+3$.
    After clearing denominators this is equivalent to
    \begin{equation}
        4b^5+16b^4-73b^3-226b^2-141b-24>0,
    \end{equation}
    which holds
    since the maximum real root of this polynomial is approximately $4.16$.
    Therefore
    \begin{equation}
        \Lambda_k'(c)
        \le-\frac{b}{b+1}+\frac1{12}
        \le-\frac23.
    \end{equation}
    Thus $D_n'(c)=\Lambda_1'(c)-\Lambda_k'(c)>0$
    throughout the interval, completing the proof.
\end{proof}

\begin{lemma}\label{lem:endpoint-right}
    Let $b\ge3$ be odd and let $n\ge N_b$ be even.
    Then
    \begin{equation}
        D_n(c)>0\qquad (2b+1\le c<n/2).
    \end{equation}
\end{lemma}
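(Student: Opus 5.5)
The plan is to prove $\Lambda_1>\Lambda_k$ for $2b+1\le c<n/2$ by locating both spectral radii explicitly through the sign criterion \eqref{eq:chi-sign-criterion}. We find an upper bound for $\Lambda_k$ from \Cref{lem:Lambda-upper-bound}, and a lower bound for $\Lambda_1$ from the trivial Perron bound $\Lambda_1>v_1$ (or a slightly sharper one). Then we show the lower bound for $\Lambda_1$ exceeds the upper bound for $\Lambda_k$.

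\emph{Lower bound for $\Lambda_1$.} We have $\Lambda_1>\max\{u_1,v_1\}$. For $c\ge 2b+1$,
\begin{equation}
v_1-u_1=c-b-1-\frac{2c}{n}\ge b-\frac{2c}{n}>b-1,
\end{equation}
so $\Lambda_1>v_1=n-b-2-c/n$. If more room is needed, we evaluate $\chi_1$ at $x=v_1$ plus a small amount. The Schur form shows $\Lambda_1\ge v_1+\frac{c^2 r_1}{n^2(\Lambda_1-v_1)}$-type gains, but $v_1$ alone should suffice away from the crossing.

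\emph{Upper bound for $\Lambda_k$.} Since $u_k-v_k$ and $u_k-w_k$ are of order $n$ (as computed in \Cref{lem:endpoint-middle}, $u_k-v_k,\,u_k-w_k\ge bk-2b$ for bounded $c$; for general $c<n/2$ one recomputes $u_k-v_k=bk-c+1+2ck/n$ and $u_k-w_k=n-1-c+2ck/n-k\cdot 0$, both positive and at least of order $bk/2$), \Cref{lem:Lambda-upper-bound} gives
\begin{equation}
\Lambda_k\le u_k+\frac{c^2k}{n^2}\left(\frac{\ell}{u_k-v_k}+\frac{bk+1}{u_k-w_k}\right).
\end{equation}
Here $u_k=n-1-c+ck/n\approx n-1-\frac{b}{b+1}c-\frac{(\ell+1)c}{(b+1)n}$, and the correction term is $O(c^2/n)$ with an explicit constant of roughly $\frac{b}{(b+1)^2}\cdot\frac{c^2}{n}\cdot\frac{1}{1-c/n}$-type size.

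\emph{Comparison.} It then suffices to show
\begin{equation}
n-b-2-\frac cn>n-1-c+\frac{ck}{n}+R_k(c),
\end{equation}
that is,
\begin{equation}
c\Bigl(1-\frac{k+1}{n}\Bigr)-(b+1)>R_k(c).
\end{equation}
Since $1-\frac{k+1}{n}\ge\frac{b}{b+1}-O(1/n)$, the left side is about $\frac{b}{b+1}c-(b+1)$. At $c=2b+1$ this is $\frac{b^2-b-1}{b+1}>0$ for $b\ge3$, and it grows linearly in $c$.

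\emph{Main obstacle.} The difficulty is that $R_k(c)$ grows like $c^2/n$. For $c$ up to $n/2$ this quadratic term is comparable to the linear gain, so a uniform treatment is needed. We would bound $R_k(c)\le\frac{c^2}{n}\cdot C_b$ with an explicit $C_b$, valid because $u_k-w_k\ge n/2-1$ (shown in \Cref{thm:endpoint-reduction}) and $u_k-v_k\ge bk-c+1\ge$ a constant multiple of $n$ when $c<n/2$. Since $bk\approx\frac{b}{b+1}n>n/2$, this requires care near $c=n/2$: there $u_k-v_k\approx(\frac{b}{b+1}-\frac12)n$, which is still linear in $n$.

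We then check the quadratic inequality $\frac{b}{b+1}c-(b+1)-C_b c^2/n>0$ at its two endpoints $c=2b+1$ and $c=n/2$. By concavity in $c$ this suffices on the whole interval. At $c=n/2$, if the crude estimates prove too weak, we would fall back on a sharper lower bound for $\Lambda_1$ using the $a^2/(x-v_1)$ term, which is of size $c^2/n$ and should dominate. The numeric checks for $b=3,5$ and for $b\ge7$ with $n\ge N_b$ would reduce to polynomial inequalities, handled as in the previous lemmas.
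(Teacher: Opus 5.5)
Your proposal is correct and takes the same route as the paper's proof. The paper uses $\Lambda_1>v_1$, bounds $\Lambda_k$ by $\Lambda_k^{+}$ from \Cref{lem:Lambda-upper-bound}, and controls the correction by a constant times $c^2/n$, getting $\frac{3bc^2}{(b+1)^2n}$ from $u_k-v_k>\frac{(b+1)k+1-\ell}{2}$ and $u_k-w_k>\frac n2-1$; it then checks the resulting concave quadratic in $c$ at $c=2b+1$ and $c=n/2$. Your cruder bound $u_k-v_k\ge bk-c+1$ and the slip in your formula for $u_k-w_k$ (it should be $n-k-1-c+2ck/n$) are harmless, and no sharper lower bound for $\Lambda_1$ is needed at either endpoint.
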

\begin{proof}
    For $c\ge 2b+1$, $v_1>\max\{u_1,w_1\}$. Thus
    \begin{equation}
        \Lambda_1>v_1=n-b-2-\frac cn.
    \end{equation}
    On the other hand, by \Cref{lem:Lambda-upper-bound},
    \begin{equation}
        \Lambda_k
        \le u_k+\frac{c^2k}{n^2}
        \left(\frac{\ell}{u_k-v_k}+\frac{bk+1}{u_k-w_k}\right).
    \end{equation}
    This is valid because $c<n/2$ and $n>2k$ gives
    \begin{align}
        u_k-v_k
        &=bk+1-c\left(1-\frac{2k}{n}\right)
          >\frac{(b+1)k+1-\ell}{2}>0, \\
        u_k-w_k
        &=n-k-1-c\left(1-\frac{2k}{n}\right)
          >\frac{(b+1)k+\ell-1}{2}>0.
    \end{align}
    Consequently, since $1\le\ell\le b<k+1$, it holds that
    \begin{align}
        u_k-v_k&>\frac{(b+1)k+1-\ell}{2}
        \ge\frac{(b+1)k+1-b}{2}
        =\frac{bk+(k+1-b)}{2}
        >\frac{bk}{2}, \\
        u_k-w_k&>\frac{(b+1)k}{2}.
    \end{align}
    Therefore
    \begin{equation}
        \frac{k}{n^2}\left(\frac{\ell}{u_k-v_k}+\frac{bk+1}{u_k-w_k}\right)
        <
        \frac1{n^2}\left(\frac{2\ell}{b}+\frac{2(bk+1)}{b+1}\right)
        \le
        \frac1{n^2}
        \left(2+\frac{2(bk+1)}{b+1}\right)
        \le
        \frac{3b}{(b+1)^2n},
    \end{equation}
    where the last inequality follows from $n\ge (b+1)k+2$
    and $b(b+1)k\ge2b^2+4$, the latter being immediate from
    $b\ge3$ and $k\ge3$. Hence
    \begin{equation}
        D_n(c)=\Lambda_1-\Lambda_k
        >\left(n-b-2-\frac cn\right)-\left(u_k+\frac{3bc^2}{(b+1)^2n}\right)
        =-b-1+c\left(1-\frac{k+1}{n}\right)-\frac{3bc^2}{(b+1)^2n}.
    \end{equation}

    Since
    \begin{equation}
        1-\frac{k+1}{n}
        =\frac{b}{b+1}-\frac{b-\ell}{(b+1)n}
        \ge\frac{b}{b+1}-\frac{b}{(b+1)n},
    \end{equation}
    we get
    \begin{equation}
        D_n(c)
        \ge-b-1+c\left(\frac{b}{b+1}-\frac{b}{(b+1)n}\right)
        -\frac{3bc^2}{(b+1)^2n}
        \eqqcolon Q_n(c).
    \end{equation}
    Notice that $Q_n(c)$
    is a concave quadratic function of $c$,
    so on $2b+1\le c\le n/2$ its minimum is attained at an endpoint.
    We claim that at $c=2b+1$ it is
    \begin{equation}
        Q_n(2b+1)=\frac{b^2-b-1}{b+1}
        -\frac{b(2b+1)}{(b+1)n}
        -\frac{3b(2b+1)^2}{(b+1)^2n}>0,
    \end{equation}
    and at $c=n/2$ it equals
    \begin{equation}
        Q_n(n/2)=\frac{bn(2b-1)}{4(b+1)^2}
        -\frac{b}{2(b+1)}-b-1>0.
    \end{equation}
    Indeed, the first of these inequalities follows from
    \begin{equation}
        \frac{b^2-b-1}{b+1}
        >\frac{b(2b+1)}{(b+1)n}+\frac{3b(2b+1)^2}{(b+1)^2n},
    \end{equation}
    which is immediate for $b=3,5$ after substituting
    $N_b=58,86$. For $b\ge 7$, after substituting
    $N_b=2b^2+5b+5$ and multiplying both sides by
    $(b+1)^2(2b^2+5b+5)$, this is equivalent to
    \begin{equation}
        2b^5+5b^4-13b^3-27b^2-19b-5,
    \end{equation}
    which is positive for all $b\ge 7$
    since the maximum real root of this polynomial is approximately $2.60$.
    The second follows from
    \begin{equation}
        \frac{bn(2b-1)}{4(b+1)^2}
        >b+1+\frac{b}{2(b+1)},
    \end{equation}
    again using $n\ge N_b$.
    This is immediate for $b=3,5$ after substituting
    $N_b=58,86$. For $b\ge7$, after substituting
    $N_b=2b^2+5b+5$ and multiplying both sides by
    $4(b+1)^2$, this is equivalent to
    \begin{equation}
        4b^4+4b^3-9b^2-19b-4>0,
    \end{equation}
    which is positive for all $b\ge 7$
    since the maximum real root of this polynomial is approximately $1.82$.
    Thus $D_n(c)>0$ on the asserted interval,
    completing the proof.
\end{proof}

Now that the three lemmas are settled,
we can derive the main result of this section.

\begin{theorem}\label{thm:endpoint-crossing}
    Fix an odd integer $b\ge3$ and set
    \begin{equation}
        N_b=(b+1)\max\{2b+3,14\}+2 .
    \end{equation}
    For every
    even $n\ge N_b$, there exists a unique
    $\alpha_\ast(n,b)\in(\frac12,1)$ such that
    \begin{equation}
        \Lambda_1(\alpha_\ast)=\Lambda_k(\alpha_\ast).
    \end{equation}
    Consequently,
    \begin{align}
        \Lambda_1(\alpha)>\Lambda_k(\alpha)
        &\quad\Longleftrightarrow\quad
        \frac12<\alpha<\alpha_\ast(n,b), \\
        \Lambda_k(\alpha)>\Lambda_1(\alpha)
        &\quad\Longleftrightarrow\quad
        \alpha_\ast(n,b)<\alpha<1, \\
        \Lambda_1(\alpha)=\Lambda_k(\alpha)
        &\quad\Longleftrightarrow\quad
        \alpha=\alpha_\ast(n,b).
    \end{align}
    Moreover,
    \begin{equation}
        \alpha_\ast(n,b)
        =1-\frac{(b+1)^2}{bn}
        +\frac{(b+1)^2(b^3+b\ell-b-1)}{b^3n^2}
        +O(n^{-3}).
    \end{equation}
\end{theorem}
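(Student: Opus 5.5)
The proof works entirely in the variable $c=n(1-\alpha)$. The map $\alpha\mapsto c$ is a decreasing bijection from $(\frac12,1)$ onto $(0,n/2)$, and $D_n(c)=\Lambda_1-\Lambda_k$ at $\alpha=1-c/n$. The sign pattern of $D_n$ comes from the three preceding lemmas.

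\emph{Existence and uniqueness of the zero.} By \Cref{lem:endpoint-left}, $D_n<0$ on $(0,b+1]$. By \Cref{lem:endpoint-right}, $D_n>0$ on $[2b+1,n/2)$. Each $\Lambda_s$ is the simple Perron root of a matrix depending polynomially on $c$, so $D_n$ is continuous. In particular $D_n(b+1)<0<D_n(2b+1)$, and \Cref{lem:endpoint-middle} says $D_n$ is strictly increasing on $[b+1,2b+1]$. Hence $D_n$ has exactly one zero $c_\ast\in(b+1,2b+1)$ and no zeros outside this interval. The sign of $D_n$ is negative for $c<c_\ast$ and positive for $c>c_\ast$. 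Setting $\alpha_\ast(n,b)=1-c_\ast/n$ and reversing orientation gives the three equivalences: $\Lambda_1>\Lambda_k$ exactly when $c>c_\ast$, i.e.\ $\alpha<\alpha_\ast$, and so on.

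\emph{Asymptotics.} Since $c_\ast\in(b+1,2b+1)$, a first guess uses \Cref{cor:Lambda-1-expansion} and \Cref{cor:Lambda-k-expansion}. Equating them gives
\[
-(b+1)+\frac{b}{b+1}c+\frac{c(b+1)}{(c-b-1)n}-\frac{c(c-\ell-1)}{(b+1)n}+O(n^{-2})=0 .
\]
At leading order this forces $c=(b+1)^2/b$, which does lie in $(b+1,2b+1)$. However, the corollaries need $c\ge c_0>b+1$ with $c_0$ fixed, so I first localize $c_\ast$. The lemmas give $D_n(b+1)<0$, and the expansions show $D_n(c)=\frac{b}{b+1}c-(b+1)+O(1/n)$ uniformly on $[c_0,2b+1]$. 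Choosing a fixed $c_0\in(b+1,(b+1)^2/b)$, we get $D_n(c_0)<0$ for large $n$, so $c_\ast\in[c_0,2b+1]$ and $c_\ast=(b+1)^2/b+O(1/n)$.

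Next I write $c_\ast=c_0'+\delta/n+O(n^{-2})$ with $c_0'=(b+1)^2/b$, and match the $1/n$ terms. At $c_0'$ we have $c_0'-b-1=(b+1)/b$, so
\[
\frac{c(b+1)}{c-b-1}=\frac{(b+1)^2}{b}\cdot b=(b+1)^2 .
\]
This gives
\[
\delta=-\frac{b+1}{b}\left[(b+1)^2-\frac{c_0'(c_0'-\ell-1)}{b+1}\right].
\]
Simplifying,
\[
\frac{c_0'(c_0'-\ell-1)}{b+1}=\frac{(b+1)\bigl((b+1)^2-b\ell-b\bigr)}{b^2},
\]
so
\[
\delta=-\frac{(b+1)^2\bigl(b^2-(b+1)+b\ell+b\bigr)}{b^3}=-\frac{(b+1)^2(b^3+b\ell-b-1)}{b^3}\cdot\frac1{b}\cdot b ,
\]
which I will recheck carefully. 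Substituting into $\alpha_\ast=1-c_\ast/n$ then yields the stated expansion.

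\emph{Main obstacle.} The delicate step is justifying the error $O(n^{-3})$ in $\alpha_\ast$, which requires $c_\ast$ to $O(n^{-2})$. The corollaries only give $D_n$ to $O(n^{-2})$, and the derivative $\partial_cD_n=\frac{b}{b+1}+O(1/n)$ is bounded away from $0$ on $[c_0,2b+1]$ (as in the proof of \Cref{lem:endpoint-middle}). A mean value or implicit function argument then turns an $O(n^{-2})$ error in $D_n$ into an $O(n^{-2})$ error in $c_\ast$, which is enough. The remaining checks are that the $O(n^{-2})$ constants are uniform in $c$ and in $\ell\in\{1,\dots,b\}$. This holds because the corollaries are uniform on the compact $c$-range and $\ell$ takes finitely many values.
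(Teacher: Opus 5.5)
Your proof is correct and follows the paper's route: the three endpoint lemmas give the unique zero $c_\ast\in(b+1,2b+1)$ and the sign equivalences, and matching the $1/n$ terms of \Cref{cor:Lambda-1-expansion,cor:Lambda-k-expansion} at $(b+1)^2/b$ gives the expansion. Your mean-value step spells out a justification that the paper's one-line ansatz $c_\ast=c_0+a/n+O(n^{-2})$ leaves implicit; note that it needs only a uniform positive lower bound on $D_n'$, which the estimates in the proof of \Cref{lem:endpoint-middle} supply, and not the unproved value $\frac{b}{b+1}+O(1/n)$.

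There is one arithmetic slip in your formula for $\delta$. The bracket should be $b^2(b+1)-(b+1)^2+b\ell+b=b^3+b\ell-b-1$, not $b^2-(b+1)+b\ell+b$, and the factor $\frac1b\cdot b$ should be dropped. Your final value $\delta=-(b+1)^2(b^3+b\ell-b-1)/b^3$ is nonetheless right and matches the paper.
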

\begin{proof}
    By \Cref{lem:endpoint-left}, $D_n(b+1)<0$,
    while by \Cref{lem:endpoint-right}, $D_n(2b+1)>0$.
    Since $D_n$ is strictly increasing on $[b+1,2b+1]$ by
    \Cref{lem:endpoint-middle},
    it has exactly one zero $c_\ast\in(b+1,2b+1)$.
    The same two endpoint lemmas exclude all zeros
    outside this interval.
    Hence $\alpha_\ast=1-c_\ast/n\in(\frac12,1)$
    is the unique crossing point.

    It remains to locate the zero.
    The endpoint expansions
    \Cref{cor:Lambda-1-expansion,cor:Lambda-k-expansion}
    give, uniformly on compact subintervals of $(b+1,2b+1)$,
    \begin{equation}\label{eq:Dn-FG-expansion}
        D_n(c)=F(c)+\frac1nG(c)+O(n^{-2}),
    \end{equation}
    where
    \begin{equation}
        F(c)\coloneqq\frac{bc}{b+1}-(b+1),
        \qquad
        G(c)\coloneqq
        \frac{c(b+1)}{c-b-1}-\frac{c(c-\ell-1)}{b+1}.
    \end{equation}
    The leading term $F(c)$ has the unique zero
    \begin{equation}
        c_0\coloneqq\frac{(b+1)^2}{b}.
    \end{equation}
    Since $b+1<c_0<2b+1$, we may fix small $\delta>0$ such that
    $b+1<c_0-\delta<c_0+\delta<2b+1$.
    From \eqref{eq:Dn-FG-expansion}, or just its leading part,
    \begin{equation}
        D_n(c)=F(c)+O(n^{-1})
    \end{equation}
    uniformly on $[c_0-\delta,c_0+\delta]$.
    Hence $D_n(c_0-\delta)<0<D_n(c_0+\delta)$
    for all sufficiently large $n$.
    Since the zero is unique, $c_\ast\in(c_0-\delta,c_0+\delta)$.

    We now refine this location by one Taylor step.
    Write
    \begin{equation}
        c_\ast=c_0+\frac{a}{n}+O(n^{-2}).
    \end{equation}
    Substituting this into \eqref{eq:Dn-FG-expansion} and using
    $F(c_0)=0$ gives
    \begin{equation}
        0=D_n(c_\ast)
        =\frac1n\left(aF'(c_0)+G(c_0)\right)+O(n^{-2}).
    \end{equation}
    Therefore
    \begin{equation}
        a=-\frac{G(c_0)}{F'(c_0)}
        =-\frac{(b+1)^2(b^3+b\ell-b-1)}{b^3}.
    \end{equation}
    Thus
    \begin{equation}
        c_\ast
        =\frac{(b+1)^2}{b}
        -\frac{(b+1)^2(b^3+b\ell-b-1)}{b^3n}
        +O(n^{-2}).
    \end{equation}
    The stated expansion for $\alpha_\ast=1-c_\ast/n$ follows.
    Finally $c=n(1-\alpha)$ is strictly decreasing in $\alpha$,
    so the two sign alternatives are exactly as claimed.
\end{proof}

\section{Proofs of the main theorems in the Introduction}\label{sec:main-theorems}
We are now ready to prove the theorems stated in the Introduction,
which completes the paper.

\begin{proof}[Proof of \Cref{thm:intro-endpoint-reduction}]
    This is exactly \Cref{thm:endpoint-reduction}.
\end{proof}

\begin{proof}[Proof of \Cref{thm:intro-endpoint-crossing}]
    The existence, uniqueness, sign alternatives,
    and the expression for the crossing of $\Lambda_1$ and $\Lambda_k$
    are exactly \Cref{thm:endpoint-crossing}.
    Combining these with the endpoint reduction
    in \Cref{thm:endpoint-reduction}
    gives the asserted extremal graphs in the whole family
    $\{G_s\}_{s=1}^k$.
\end{proof}

\begin{proof}[Proof of \Cref{thm:intro-general}]
    Suppose that $G$ has no odd $[1,b]$-factor.  By
    \Cref{lem:sharp-transfer}, if
    \begin{equation}
        \ra(G)\ge\max_{1\le s\le k}\ra(G_s),
    \end{equation}
    then $G\cong G_s$ for some graph $G_s$
    that is extremal in the family.
    The extremal graphs are identified in
    \Cref{thm:intro-endpoint-crossing}:
    for $\frac12<\alpha<\alpha_\ast(n,b)$ it is $G\cong G_1$;
    for $\alpha_\ast(n,b)<\alpha<1$ it is $G\cong G_k$;
    and for $\alpha=\alpha_\ast(n,b)$ the extremal graphs are
    $G\cong G_1$ and $G\cong G_k$.
    This is precisely the conclusion of \Cref{thm:intro-general}.
\end{proof}

\bibliographystyle{amsplain-with-doi}
\bibliography{references}

\end{document}